\newtheorem{thm}{Theorem}[section]
\newtheorem{cor}[thm]{Corollary}
\newtheorem{lem}[thm]{Lemma}
\newtheorem{prop}[thm]{Proposition}
\newtheorem{fact}[thm]{Fact}
\newtheorem{prob}[thm]{Problem}
\theoremstyle{definition}
\theoremstyle{remark}
\numberwithin{equation}{section}
\begin{document}

\title[Expressing flagged Schur functions with Gessel-Viennot]{Row bounds needed to justifiably express flagged Schur functions with Gessel-Viennot determinants}

\author{Robert A. Proctor\affiliationmark{1} \and Matthew J. Willis\affiliationmark{2}}

\affiliation{University of North Carolina, Chapel Hill, NC 27599 U.S.A. \\
University of Delaware-Georgetown, Georgetown, DE 19947 U.S.A.}

\keywords{flagged Schur function, Gessel-Viennot method, Jacobi-Trudi identity, nonintersecting lattice paths, parabolic Catalan number}

\received{2020-07-10}

\revised{2021-03-30}

\accepted{2021-04-01}

\publicationdetails{23}{2021}{1}{9}{6632}

\maketitle

\begin{abstract}
Let $\lambda$ be a partition with no more than $n$ parts.  Let $\beta$ be a weakly increasing $n$-tuple with entries from $\{ 1, ... , n \}$.  The flagged Schur function in the variables $x_1, ... , x_n$ that is indexed by $\lambda$ and $\beta$ has been defined to be the sum of the content weight monomials for the semistandard Young tableaux of shape $\lambda$ whose values are row-wise bounded by the entries of $\beta$.  Gessel and Viennot gave a determinant expression for the flagged Schur function indexed by $\lambda$ and $\beta$; this could be done since the pair $(\lambda, \beta)$ satisfied their ``nonpermutable'' condition for the sequence of terminals of an $n$-tuple of lattice paths that they used to model the tableaux.  We generalize flagged Schur functions by dropping the requirement that $\beta$ be weakly increasing.  Then for each $\lambda$ we give a condition on the entries of $\beta$ for the pair $(\lambda, \beta)$ to be nonpermutable that is both necessary and sufficient.  When the parts of $\lambda$ are not distinct there will be multiple row bound $n$-tuples $\beta$ that will produce the same set of tableaux.  We accordingly group the bounding $\beta$ into equivalence classes and identify the most efficient $\beta$ in each class for the determinant computation.  We recently showed that many other sets of objects that are indexed by $n$ and $\lambda$ are enumerated by the number of these efficient $n$-tuples.  We called these counts ``parabolic Catalan numbers''.  It is noted that the $GL(n)$ Demazure characters (key polynomials) indexed by 312-avoiding permutations can also be expressed with these determinants.

\end{abstract}

\vspace{1pc}\noindent\textbf{MSC Codes.}  05E05, 05A19

\section{Introduction}\label{intro}

Fix $n \geq 0$ and integers $\lambda_1 \geq \lambda_2 \geq ... \geq \lambda_n \geq 0$ throughout.  The Schur function $s_\lambda(x)$ in the variables $x_1, ... , x_n$ can be defined as the sum of the content weight monomial $x^{\theta(T)}$ over all semistandard tableaux $T$ of shape $\lambda$ with values from $\{ 1, 2, ... , n \} =: [n]$.  To generalize Schur functions, fix a ``flag'' of integers $0 \leq \beta_1 \leq \beta_2 \leq ... \leq \beta_n$.  Define $\mathcal{S}_\lambda(\beta)$ to be the set of all such tableaux whose values in the $i^{th}$ rows of their (English) shapes do not exceed the row bound $\beta_i$ for $i \in [n]$.  The flagged Schur function $s_\lambda(\beta;x)$ has been defined to be the multivariate generating function for the row bounded tableaux $T$ in $\mathcal{S}_\lambda(\beta)$ using the weight $x^{\theta(T)}$.  The set $\mathcal{S}_\lambda(\beta)$ is nonempty if and only if the row bound $n$-tuple $\beta$ has $\beta_i \geq i$ for $i \in [n]$; henceforth we assume that all row bound $n$-tuples $\beta$ satisfy this ``upper'' condition.

Gessel and Viennot considered $n$-tuples of nonintersecting lattice paths in \cite{GV}.  They could express a generating function for these $n$-tuples with a determinant via a cancellation argument, provided that the $n$-tuples of the terminals for the lattice paths satisfied their ``nonpermutable'' condition.  The Jacobi-Trudi identity expresses the Schur function $s_\lambda(x)$ as a determinant whose entries are homogeneous symmetric functions.  In his books \cite{St1} \cite{St2}, Stanley presented a Gessel-Viennot proof of the Jacobi-Trudi identity for $s_\lambda(x_1, x_2,...)$.  That proof converts the $n$-tuples of nonintersecting lattice paths to semistandard tableaux.  It more generally can produce a determinant expression for a skew flagged Schur function $s_{\lambda / \mu}(\beta;x)$, since it is noted that the flag condition on the row bounds $\beta$ is sufficient for the satisfaction of the nonpermutable condition for any given $\lambda$ and $\mu$.  The Gessel-Viennot method is still often used \cite{BRT} \cite{MPP} \cite{MS} \cite{Oka} to express the generating functions or the cardinalities for tableau sets of this  nature.

While limiting our attention to nonskew tableaux, we de-emphasize the flag condition and more generally consider all upper row bounds $\beta$ when forming the sets $\mathcal{S}_\lambda(\beta)$.  We continue to denote the corresponding generating functions by $s_\lambda(\beta;x)$.  In this more general context, can the Gessel-Viennot method still be employed to express $s_\lambda(\beta;x)$ with a determinant?  Our main result, Theorem \ref{theorem777.1}, presents $\lambda$-dependent conditions on the general upper $n$-tuples $\beta$ that are \emph{necessary as well as sufficient} for the corresponding $\lambda$-dependent $n$-tuples of lattice path terminals to be nonpermutable.  We then make some remarks on the ``row bound sums'' $s_\lambda(\beta;x)$ for general $\beta$ and on the computation of these sums with Gessel-Viennot determinants.

This paper is the third in a series of papers on key polynomials (which are the Demazure polynomials, or Demazure characters of type A) and flagged Schur functions.  Each of the themes running through these papers is of more interest to us than any one of the results is by itself.

For $i \in [n]$, the shape $\lambda$ has $\lambda_i$ boxes in its $i^{th}$ row.  The parts of $\lambda$ form a strictly decreasing sequence if and only if every one of the possible column lengths $1, 2, ... , n-1$ less than $n$ is present in the shape of $\lambda$.  It seems that the phenomena that arise when $\lambda$ is not strict may have received relatively little attention in the studies of Demazure polynomials and of flagged Schur functions.  For example, when $\lambda$ is not strict the Demazure polynomials are precisely indexed by the multipermutations for the quotient $W^J$ of the symmetric group $S_n$, and not by the permutations in $S_n$.  Many of the phenomena considered in these papers are trivial or vacuous when $\lambda$ is strict.  For example, there exist upper $n$-tuples $\beta' \neq \beta$ with $\mathcal{S}_\lambda(\beta') = \mathcal{S}_\lambda(\beta)$ exactly when $\lambda$ is not strict.  Let $R_\lambda$ be the set of column lengths less than $n$ that appear in the shape $\lambda$.  The central objects in this series of papers are $n$-tuples with entries from $[n]$ which have been equipped with dividers that are placed in locations between their entries which are indexed by the elements of $R_\lambda$.  In these papers we introduce several properties which may be possessed by such ``$R_\lambda$-tuples''.  In \cite{PW2} we defined the $R_\lambda$-parabolic Catalan number to be the number of ``$\lambda$-312-avoiding $R_\lambda$-permutations''.  The most interesting special kinds of $R_\lambda$-tuples are those which are also enumerated by the $R_\lambda$-parabolic Catalan numbers.  These include the most important of the $R_\lambda$-tuples that arise from the considerations in this paper, the ``gapless'' $R_\lambda$-tuples.  It is striking that the gapless $R_\lambda$-tuples had already arisen in \cite{PW2} and \cite{PW3} for different considerations in each of those two papers.  The gapless $R_\lambda$-tuples appear to be fundamental new combinatorial quantities.

In this area, how reliably does an equality for two generating functions predict equality for their underlying sets of tableaux?  Reiner and Shimozono \cite{RS} and then Postnikov and Stanley \cite{PS} obtained results concerning the possible equalities between Demazure polynomials $d_\lambda(\pi;x)$ and flagged Schur functions $s_\lambda(\beta;x)$.  In \cite{PW3} we built upon their work by showing that when $d_\lambda(\pi;x) = s_\lambda(\beta;x)$ then the equality $\mathcal{D}_\lambda(\pi) = \mathcal{S}_\lambda(\beta)$ for the underlying tableau sets also holds:  in other words, we ruled out any ``accidental'' equalities of the form $d_\lambda(\pi;x) = s_\lambda(\beta;x)$ with $\mathcal{D}_\lambda(\pi) \neq \mathcal{S}_\lambda(\beta)$.  Care must be taken to avoid mischaracterizing our main result, Theorem \ref{theorem777.1}.  While it states sharp necessary conditions that are needed to be able to justifiably apply the Gessel-Viennot method to express the row bound sum $s_\lambda(\beta;x)$ with the Gessel-Viennot determinant polynomial (denoted $gv_\lambda(\beta;x)$), it is conceivable that such an equality could nonetheless ``accidentally hold'' while those conditions are not being satisfied.  We define two upper $R_\lambda$-tuples $\beta$ and $\beta'$ to be equivalent when $\mathcal{S}_\lambda(\beta) = \mathcal{S}_\lambda(\beta')$.  Then $s_\lambda(\beta;x) = s_\lambda(\beta';x)$.  It is conceivable that accidental equalities of the form $s_\lambda(\beta;x) = s_\lambda(\beta';x)$ while $\mathcal{S}_\lambda(\beta) \neq \mathcal{S}_\lambda(\beta')$ could exist.

As we consider various kinds of upper $R_\lambda$-tuples $\beta$, we study the applicability and the efficiency of the Gessel-Viennot method for expressing the general row bound sum polynomials $s_\lambda(\beta;x)$ with determinants.  The following overview of our conclusions is organized with an outlined hierarchy of kinds of upper $R_\lambda$-tuples.  Some readers may wish to defer reading this technical summary until their second reading of this introduction, and some readers may prefer to read the items in the order I, II, A, B, (1), (2), (3).

\noindent I.  When $\beta$ is a ``gapless core'' $R_\lambda$-tuple, then there exists a flag $\varphi$ that is equivalent to it.  So the polynomial $s_\lambda(\beta;x)$ arises as an already-known flagged Schur function $s_\lambda(\varphi;x)$ and thus it is equal to the Gessel-Viennot determinant $gv_\lambda(\varphi;x)$.  However, the determinant expression $gv_\lambda(\beta;x)$ for $s_\lambda(\beta;x)$ will often have fewer total monomials among its entries than  $gv_\lambda(\varphi;x)$; see for example Proposition \ref{prop826.9}.

\noindent \hspace{3mm} A.  When $\beta$ is additionally a ``bounded platform'' $R_\lambda$-tuple, then the nonpermutable condition will be satisfied.  This is the sufficient direction of our main result, Theorem \ref{theorem777.1}.  Here the Gessel-Viennot method can be immediately applied to express $s_\lambda(\beta;x)$ as the determinant $gv_\lambda(\beta;x)$, as is stated in Corollary \ref{cor777.2}.

\noindent \hspace{6mm}  (1)  When $\beta$ is also a flag, then the sum $s_\lambda(\beta;x)$ is a flagged Schur function from the outset.  But a flag $\beta$ is probably not as efficient for the purposes of determinant evaluation as some equivalent gapless core bounded platform $R_\lambda$-tuple $\beta'$ would be.

\noindent \hspace{6mm}  (2)  In fact, for efficient evaluation of the Gessel-Viennot determinant, the best possible $R_\lambda$-tuples are the gapless $R_\lambda$-tuples.  See Proposition \ref{prop826.9}.

\noindent \hspace{6mm}  (3)  The two ``gapless'' and ``flag'' criteria are independent for bounded platform gapless core $R_\lambda$-tuples:  either, both, or neither may be possessed.

\noindent \hspace{3mm}  B.  When $\beta$ is not a bounded platform $R_\lambda$-tuple, then the nonpermutable condition will not be satisfied.  This is the necessary direction of Theorem \ref{theorem777.1}.  But this gapless core $\beta$ can nonetheless be ``pre-processed'' to produce an equivalent $\beta'$ to which the Gessel-Viennot method can be applied.  See Corollary \ref{cor777.2.5}.  When $\beta$ is a gapless core $R_\lambda$-tuple, Corollary \ref{fctinterval} describes all gapless core bounded platform $R_\lambda$-tuples $\beta'$ for which $s_\lambda(\beta;x) = gv_\lambda(\beta';x)$.

\noindent II.  When $\beta$ is not a gapless core $R_\lambda$-tuple, then $s_\lambda(\beta;x)$ cannot arise as a flagged Schur function.  In Section 8 the necessary direction of Theorem \ref{theorem777.1} will be used to remark that there are no $R_\lambda$-tuples $\beta'$ equivalent to such a $\beta$ for which the nonpermutable condition is satisfied.

Both in \cite{PW3} and in this paper we have found some ``nice'' properties that are possessed by the flagged Schur functions which do not hold (or which have not been obtained by us) for the row bound sums $s_\lambda(\beta;x)$ that arise from non-gapless core $R_\lambda$-tuples.  See the last paragraph of Section 8.  Problem \ref{open} asks if a row bound sum $s_\lambda(\beta;x)$ for an upper $R_\lambda$-tuple $\beta$ that is not a gapless core $R_\lambda$-tuple can ``accidentally'' be equal to a Gessel-Viennot determinant.

Corollary \ref{EqualDeterms} describes a potential application to determinant enumerations:  If two seemingly unrelated sets of combinatorial objects are each enumerated with determinants of binomial coefficients and many test evaluations of them agree, this corollary provides an alternative to row and column operations for proving that the determinants are equal in general.

We conclude by indicating where this material is situated and how we were led to the considerations that crystallized into our main result.  Demazure characters arose in 1974 when Demazure introduced certain $B$-modules while studying singularities of Schubert varieties in the $G/B$ flag manifolds.  For $G = GL(n)$, a Demazure polynomial $d_\lambda(\pi;x)$ can be expressed as the sum of the weight monomial $x^{\theta(T)}$ over a certain set $\mathcal{D}_\lambda(\pi)$ of semistandard tableaux of shape $\lambda$.  Flagged Schur functions arose in 1982 when Lascoux and Sch\"{u}tzenberger were studying Schubert polynomials for the flag manifold $GL(n)/B$.  Seeking a deeper understanding of the results in \cite{RS} and  \cite{PS} that related the polynomials $d_\lambda(\pi;x)$ and $s_\lambda(\beta;x)$ to each other led us to the studies described in this three paper series.  Set $N := \lambda_1 + \lambda_2 + ... + \lambda_n$.  Our main result in \cite{PW2}, Corollary 7.2, stated that the set $\mathcal{D}_\lambda(\pi)$ of ``Demazure tableaux'' is convex in $\mathbb{Z}^N$ if and only if the $R_\lambda$-permutation $\pi$ is $\lambda$-312-avoiding.  Both directions of that result were used in \cite{PW3} as we sharpened, extended, and deepened the results of \cite{RS} and \cite{PS}.  The proof of Corollary 10.4(i) of \cite{PW3} used the main result of \cite{PW2}.  When $\beta$ has the gapless core property, this corollary ruled out the accidental equalities $s_\lambda(\beta;x) = s_\lambda(\beta';x)$ with $\mathcal{S}_\lambda(\beta) \neq \mathcal{S}_\lambda(\beta')$. Corollary 10.4(i) is restated here as Fact \ref{setequality}, for use in the proof of Corollary \ref{fctinterval}.  When combined, Corollary 9.2 of \cite{PW2} and Theorem 13.1 of \cite{PW3} list nearly a dozen kinds of $R_\lambda$-tuples and phenomena that are counted by the $R_\lambda$-parabolic Catalan numbers.  The ruling out of the accidental equalities just mentioned together with the connection between flagged Schur functions and Demazure polynomials led to the enumeration of the distinct flagged Schur polynomials that is listed as Part (iv) of Theorem 13.1.  The last item in these lists, Part (vi) of Theorem 13.1 of \cite{PW3}, refers to the enumeration in Corollary \ref{cor826.12} below.  Being aware that interesting phenomena arise for the row bound sums $s_\lambda(\beta;x)$ when $\lambda$ is not strict and being familiar with gapless $R_\lambda$-tuples and the $R_\lambda$-ceiling map from \cite{PW2} and \cite{PW3} enabled us to recognize the necessity of the conditions in Theorem \ref{theorem777.1}, and then to see that these conditions extended the realm of sufficiency from flags alone.

Routine definitions appear in Section \ref{defns} and advanced definitions appear in Section \ref{advdefns}.  Our main results are stated in Section \ref{main}, after our set-up of the Gessel-Viennot mechanics has been presented in Section \ref{paths}.  Sections \ref{necc} and \ref{suff} contain the proofs of our main results.  Sections \ref{nice} and \ref{effic} consider equivalences for the row bounds $\beta$, two kinds of accidental equalities, and efficiency for the Gessel-Viennot determinants.  Section \ref{Dem} presents an application to representation theory and algebraic geometry;  there we improve a determinant expression for some Demazure polynomials that appeared in \cite{PS}.

\section{Elementary $\mathbf{\emph{n}}$-tuples and shapes, tableaux, polynomials}\label{defns}

Fix $n \geq 1$ throughout the paper.  Let $i, k \geq 0$.  Define $(i, k] := \{i+1,$ $i+2, ... , k\}$ and $[k] := \{1, 2, ... , k \}$ and so on.  Lower case Greek letters indicate tuples of non-negative integers; their entries are denoted with the same letter.  Other than $\zeta$, all of these tuples are $n$-tuples.  An $nn$-\textit{tuple} $\nu$ has $n$ \emph{entries} $\nu_i \in [n]$ indexed by $n$ \emph{indices} $i \in [n]$.  Let $P(n)$ denote the poset of $nn$-tuples ordered by entrywise comparison.  An $nn$-tuple $\varphi$ is a \textit{flag} if $\varphi_1 \leq \ldots \leq \varphi_n$.  An \emph{upper tuple} is an $nn$-tuple $\beta$ such that $\beta_i \geq i$ for $i \in [n]$.

Fix $R \subseteq [n-1]$.  Set $r := |R|$.  When $R \neq \emptyset$ denote the elements of $R$ by $q_1 < \ldots < q_r$.  Two numbers not in $R$ are $q_0 := 0$ and $q_{r+1} := n$.  We use the $q_h$ for $h \in [r]$ to specify the locations of $r$ ``dividers'' within $nn$-tuples:  Let $\nu$ be an $nn$-tuple.  On the graph of $\nu$ in the first quadrant draw vertical lines at $x = q_h + \epsilon$ for $h \in [r]$ and some small $\epsilon > 0$.  These $r$ lines separate the $r+1$ \emph{carrels} $(q_{h-1}, q_h]$ \emph{of $\nu$} for $h \in [r+1]$.  An \emph{$R$-tuple} is an $nn$-tuple that has been equipped with these $r$ dividers.  For $h \in [r+1]$ the $h^{th}$ carrel contains $q_h - q_{h-1}$ indices.  Fix an $R$-tuple $\nu$;  we portray it by $(\nu_1, ... , \nu_{q_1} \hspace{1mm} | \hspace{1mm} \nu_{q_1+1}, ... , \nu_{q_2}\hspace{1mm} | \hspace{1mm} ... \hspace{1mm} | \hspace{1mm} \nu_{q_r+1}, ... , \nu_n)$.  When $n = 9$ and $R = \{3, 8 \}$, the dots in Figure 5.1 display $\nu := (2,7,5\hspace{1mm} | \hspace{1mm}8,6,6,9,9\hspace{1mm} | \hspace{1mm}9)$.  Let $U_R(n)$ denote the subposet of $P(n)$ consisting of upper $R$-tuples.  An \emph{upper $R$-flag} is an upper flag regarded as an $R$-tuple.  Let $UF_R(n) \subseteq U_R(n)$ denote the set of upper $R$-flags.  An \emph{$R$-increasing tuple} is an $R$-tuple $\alpha$ such that  $\alpha_{q_{h-1}+1} < ... < \alpha_{q_h}$ for $h \in [r+1]$.  An example is given by $\delta$ in Table 6.1.  Let $UI_R(n) \subseteq U_R(n)$ denote the set of $R$-increasing upper tuples.  More kinds of $R$-tuples will be introduced in Sections \ref{advdefns} and \ref{effic}; the Table 4.1 directory lists the six essential kinds.

A \emph{partition} is an $n$-tuple $\lambda \in \mathbb{Z}^n$ such that $\lambda_1 \geq \ldots \geq \lambda_n \geq 0$.  Fix a partition $\lambda$ throughout.  The \textit{shape} of $\lambda$, also denoted $\lambda$, consists of $n$ left justified rows with $\lambda_1, \ldots, \lambda_n$ boxes.  We denote its column lengths by $\zeta_1 \geq \ldots \geq \zeta_{\lambda_1}$.  Since the columns were more important than the rows in \cite{PW2} and \cite{PW3}, the boxes of $\lambda$ are transpose-indexed by pairs $(j,i)$ such that $1 \leq j \leq \lambda_1$ and $1 \leq i \leq \zeta_j$.  Define $R_\lambda \subseteq [n-1]$ to be the set of distinct column lengths of $\lambda$ that are less than $n$.  Taking $R := R_\lambda$, note that for $h \in [r+1]$ one has $\lambda_i = \lambda_{i^\prime}$ when $i$ and $i^\prime$ are in the $h^{th}$ carrel $(q_{h-1}, q_{h} ]$ specified by $R$.  Here the number of rows in the shape of $\lambda$ that have length $\lambda_{q_h}$ is $q_h - q_{h-1}$.  For $h \in [r]$ the coordinates of the boxes in the $h^{th}$ \emph{cliff} of the shape $\lambda$ form the set $\{ (\lambda_i, i) : i \in (q_{h-1}, q_{h} ] \}$.  To reduce clutter, we replace `$R_\lambda$' by `$\lambda$' in subscripts and in prefixes when we are using $R := R_\lambda$ for the notions above.

A \textit{(semistandard) tableau of shape $\lambda$} is a filling of $\lambda$ with values from $[n]$ that strictly increase from north to south within a column and weakly increase from west to east within a row.  See Fig. 6.1.  Let $\mathcal{T}_\lambda$ denote the set of tableaux of shape $\lambda$.  Fix $T \in \mathcal{T}_\lambda$.  For $j \in [\lambda_1]$, we denote the one column ``subtableau'' formed by the $j^{th}$ column by $T_j$.  Here for $i \in [\zeta_j]$ the tableau value in the $i^{th}$ row is denoted $T_j(i)$.  To define the \emph{content $\Theta(T) := \theta$ of $T$}, for $i \in [n]$ take $\theta_i$ to be the number of values in $T$ equal to $i$.  Let $x_1,\ldots, x_n$ be indeterminants.  The \textit{monomial} $x^{\Theta(T)}$ of $T$ is $x_1^{\theta_1}\ldots x_n^{\theta_n}$.

Let $\beta$ be a $\lambda$-tuple.  We define the \emph{row bound set of tableaux} to be $\mathcal{S}_\lambda(\beta) := \{ T \in \mathcal{T}_\lambda : T_j(i) \leq \beta_i \text{ for } j \in [\lambda_1] \text{ and } i \in [\zeta_j] \}$.  As in Section 8 of \cite{PW3}, it can be seen that $\mathcal{S}_\lambda(\beta)$ is nonempty if and only if $\beta \in U_\lambda(n)$.  Fix $\beta \in U_\lambda(n)$.    In \cite{PW3} we introduced the \emph{row bound sum} $s_\lambda(\beta; x) := \sum x^{\Theta(T)}$, sum over $T \in \mathcal{S}_\lambda(\beta)$.  For upper $\lambda$-flags $\varphi \in UF_\lambda(n)$ we more specifically refer to the flagged Schur functions $s_\lambda(\varphi;x)$ as \emph{flag Schur polynomials}.  The ``gapless core'' Schur polynomials $s_\lambda(\beta; x)$ are defined in Section \ref{nice}.

\section{Lattice paths and Gessel-Viennot determinant}\label{paths}

We establish conventions and notations for constructions that are analogous to those presented in Section 2.7 of \cite{St1} and Section 7.16 of \cite{St2}.  First we introduce $n$-tuples of weighted lattice paths to model the tableaux in the row bound tableau set $\mathcal{S}_\lambda(\beta)$.  To obtain a close visual correspondence with tableaux, we first flip the $x$-$y$ plane containing the lattice paths vertically so that its first quadrant is to the lower right (southeast) of the origin on the page.  Our transposed matrix coordinates for the boxes in shapes can then be re-used to coordinatize the points in this first quadrant of $\mathbb{Z} \times \mathbb{Z}$:  Let $j \geq 0$ and $i \geq 1$.  The lattice point $(j,i)$ is $j$ units to the east of $(0,0)$ and $i$ units to the south of $(0,0)$.  For $j \geq 1$, the directed line segment from $(j-1,i)$ to $(j,i)$ is an \emph{easterly step of depth} $i$.  Let $l \geq j$ and $k \geq i$.  A \emph{(lattice) path with source $(j,i)$ and sink $(l,k)$} is a connected set incident to $(j,i)$ and $(l,k)$ that is the union of $l-j$ easterly steps and $k-i$ \emph{southerly steps}.  The notation $... \rightarrow (j,i) \downarrow (j,k) \rightarrow (l,k) \downarrow ...$ indicates that an eastbound path arrives at $(j,i)$, turns right and proceeds south to $(j,k)$, turns left and proceeds east to $(l,k)$, and then turns right and proceeds south.  An \emph{$n$-path} is an $n$-tuple $(\Lambda_1, ... , \Lambda_n) =: \Lambda$ of paths such that the component path $\Lambda_m$ has source $(n-m,m)$ for $m \in [n]$.  An $n$-path is \emph{disjoint} if no two component paths intersect.

Let $\beta \in P(n)$.  The $n$ points $(\lambda_1+n-1, \beta_1), (\lambda_2+n-2, \beta_2),... ,(\lambda_n,\beta_n)$ are \emph{terminals} and $(\lambda, \beta)$ is a \emph{terminal pair}.  This ``strictification'' of $\lambda$ ensures that the longitudes of the terminals are distinct.  Initially our $n$-paths $(\Lambda_1, ..., \Lambda_n)$ will use the terminals $(\lambda_1+n-1, \beta_1), (\lambda_2+n-2,\beta_2), ... , (\lambda_n, \beta_n)$ in this order as sinks for their respective components.  Given such an $n$-path $\Lambda$, as in the proof of Theorem 7.16.1 of \cite{St2} we attempt to create a corresponding tableau $T \in \mathcal{S}_\lambda(\beta)$.  Figure 3.1 presents an example for this correspondence wherein $n=8$ and $\lambda = (6, 4, 4, 4, 4, 1, 0, 0)$ and $\beta = (4,7,4,6,8,7,8,8)$.  For each $m \in [n]$ we record the weakly increasing depths of the successive easterly steps in the path $\Lambda_m$ from left to right in the boxes of the $m^{th}$ row of the shape $\lambda$:  Here the easterly step in $\Lambda_m$ from $(n-m+j-1,p)$ to $(n-m+j,p)$ is recorded as the value $p$ in the box $(j,m)$ for $T$.  The last value in the $m^{th}$ row cannot exceed $\beta_m$.  As is implicit in \cite{St2}, these values strictly increase down each column of $\lambda$ if and only if there are no intersections among the $\Lambda_m$ for $m \in [n]$.  (To see this, let $i \in [q_r -1]$ and $j \in [\lambda_{i+1}]$.  Set $t := T_j(i)$ and suppose $T_j(i+1) = t+1$.  Then the easterly steps for $T_j(i)$ and $T_j(i+1)$ are $(n-i+j-1,t) \rightarrow (n-i+j,t)$ and $(n-i-1+j-1,t+1) \rightarrow (n-i-1+j,t+1)$.  So the ``near miss'' for these tableau values translates to a ``near miss'' for the path edges.)  Let $\mathcal{LD}_\lambda(\beta)$ denote the set of such $n$-paths that are disjoint.  There is at least one such disjoint $n$-path if and only if $\beta$ is upper.  As is claimed in \cite{St2}, this recording process can be seen to be  bijective to the set $\mathcal{S}_\lambda(\beta)$.  Since it will be seen that the carrels and cliffs of $\lambda$ play a crucial role, we now determine $R_\lambda$ from $\lambda$ and regard $\beta$ as being a $\lambda$-tuple.  Summarizing:

\begin{fact}\label{fact315.5}We have $\mathcal{LD}_\lambda(\beta) \neq \emptyset$ if and only if $\beta \in U_\lambda(n)$.  For $\beta \in U_\lambda(n)$, the recording process is a bijection from the set of disjoint $n$-paths $\mathcal{LD}_\lambda(\beta)$ to the row bound tableau set $\mathcal{S}_\lambda(\beta)$.  \end{fact}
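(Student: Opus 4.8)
The statement is a summary of the discussion leading up to it; the plan is to organize that discussion into three steps. Throughout, write $\Phi$ for the recording map, which sends an $n$-path $\Lambda=(\Lambda_1,\dots,\Lambda_n)$ to the filling of $\lambda$ whose box $(j,m)$ holds the depth of the $j^{\text{th}}$ easterly step of $\Lambda_m$. First I would record the bookkeeping. Since $\Lambda_m$ runs from the source $(n-m,m)$ to the sink $(\lambda_m+n-m,\beta_m)$, it consists of exactly $\lambda_m$ easterly steps and $\beta_m-m$ southerly steps, so $\Phi(\Lambda)$ does fill the shape of $\lambda$, and no such $\Lambda_m$ exists unless $\beta_m\geq m$. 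Scanning the easterly steps of $\Lambda_m$ from west to east, the depth can only repeat (across an easterly step) or strictly increase (across a run of southerly steps), so the $m^{\text{th}}$ row of $\Phi(\Lambda)$ weakly increases; and the depth of the last easterly step of $\Lambda_m$ is at most the sink depth $\beta_m$, which is exactly the row bound $T_{\lambda_m}(m)\leq\beta_m$. The source of $\Lambda_i$, at depth $i$, together with its sink, at depth $\beta_i$, forces $i\leq\beta_i$; this is the path-side form of the hypothesis $\beta\in U_\lambda(n)$, matching the instances $i\leq\beta_i$ already built into the definition of $\mathcal{S}_\lambda(\beta)$. Hence, for $\beta\in U_\lambda(n)$, the map $\Phi$ carries $n$-paths with the prescribed sinks to fillings of $\lambda$ with weakly increasing rows obeying the $\beta$-row-bounds.

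Next I would exhibit the inverse of $\Phi$ on these fillings and then cut both sides down by the column-strict condition. Given such a filling $T$, reconstruct $\Lambda_m$ by starting at $(n-m,m)$, then for $j=1,\dots,\lambda_m$ descending to depth $T_j(m)$ and taking one easterly step, and finally descending to $\beta_m$; this is well defined because $m\leq T_1(m)\leq\cdots\leq T_{\lambda_m}(m)\leq\beta_m$ (using $\beta\in U_\lambda(n)$, the weak row-increase, the row bound, and, when row $m$ is nonempty, the column strictness $T_1(m)\geq m$ of a tableau), and it produces a path with the correct source and sink. Because the longitude of the $j^{\text{th}}$ easterly step of $\Lambda_m$ is forced to be $(n-m+j-1)\to(n-m+j)$, this reconstruction and $\Phi$ are mutually inverse, giving a bijection between $n$-paths with the prescribed sinks and fillings of $\lambda$ with weakly increasing rows obeying the $\beta$-bounds. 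Now imposing $T_j(i)<T_j(i+1)$ for all admissible $i,j$ cuts the tableau side down to $\mathcal{S}_\lambda(\beta)$, and on the path side this strictness is exactly pairwise disjointness of the component paths, so $\Phi$ restricts to a bijection $\mathcal{LD}_\lambda(\beta)\to\mathcal{S}_\lambda(\beta)$. I expect this last equivalence to be the only step with any friction: it is the standard non-intersection argument used in the proof of Theorem 7.16.1 of \cite{St2}, and one proves it by noting that the sources $(n-m,m)$ and sinks $(\lambda_m+n-m,\beta_m)$ move strictly west as $m$ increases while $\lambda$ being a partition makes consecutive rows nest, so that at every longitude common to $\Lambda_i$ and $\Lambda_{i+1}$ their occupied depth intervals are disjoint precisely when the relevant column entries of $T$ satisfy the strict inequality, and disjointness of all pairs then reduces to that of consecutive pairs. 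This is the content of the ``it can be seen'' assertion immediately preceding the Fact.

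Finally, for the nonemptiness equivalence: when $\beta\in U_\lambda(n)$, the bijection just established gives $\mathcal{LD}_\lambda(\beta)\neq\emptyset\iff\mathcal{S}_\lambda(\beta)\neq\emptyset$, and the right-hand side holds by the emptiness/nonemptiness analysis recalled from Section 12 of \cite{PW}; when $\beta\notin U_\lambda(n)$, say $\beta_m<m$, there is no lattice path whatsoever from $(n-m,m)$ to $(\lambda_m+n-m,\beta_m)$ since depths do not decrease along a path, so $\mathcal{LD}_\lambda(\beta)=\emptyset$.
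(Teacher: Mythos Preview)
Your proposal is correct and follows the paper's own approach: the Fact is presented there merely as a summary of the preceding paragraph, with the column-strictness $\Leftrightarrow$ disjointness equivalence deferred to Theorem~7.16.1 of \cite{St2} and the nonemptiness statement to Section~12 of \cite{PW}, exactly as you invoke them.

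One small wrinkle worth cleaning up: in your intermediate step you describe the image of $\Phi$ as ``fillings of $\lambda$ with weakly increasing rows obeying the $\beta$-bounds'', but the image is the smaller set of such fillings in which every entry of row $m$ is at least $m$ (since $\Lambda_m$ starts at depth $m$). Your inverse construction needs this lower bound $m\leq T_1(m)$, and you justify it by invoking ``the column strictness $T_1(m)\geq m$ of a tableau'' --- circular at that stage, since column strictness is only imposed in the next sentence. Either build the lower bound into the description of the intermediate image, or skip the intermediate bijection entirely and argue directly that $\Phi$ and its inverse restrict to mutually inverse maps between $\mathcal{LD}_\lambda(\beta)$ and $\mathcal{S}_\lambda(\beta)$.
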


\begin{figure}[h!]
  \begin{center}
    \includegraphics[scale=1]{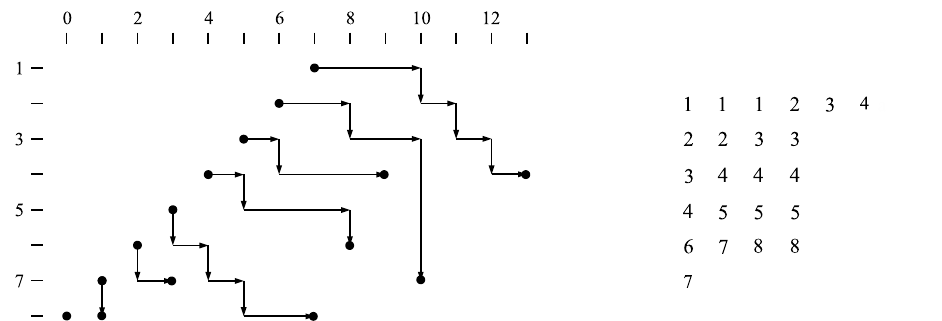}
    \caption*{Figure 3.1.  Converting a disjoint 8-path to a semistandard tableau.}
  \end{center}
\end{figure}

To visualize the sequence $(\lambda_1 + n - 1, \beta_1), (\lambda_2 + n - 2, \beta_2), ... , (\lambda_n, \beta_n)$ of $n$ terminals in the plane (as in Figure 3.1), one rotates the graph of $\beta$ (as in Figure 5.1) by $180^\circ$ around the origin and then shifts the $i^{th}$ dot by $\lambda_i+n$ to the east.

Fix $\beta \in U_\lambda(n)$.  To obtain a determinant expression for $s_\lambda(\beta;x)$ we will need to consider more general $n$-paths and introduce weights.  Let $\Lambda$ be an $n$-path with arbitrary sinks.  Assigning a weight monomial $x^{\Theta(\Lambda)}$ to $\Lambda$ in the following fashion emulates our assignment of the weight $x^{\Theta(T)}$ to a tableau $T \in \mathcal{T}_\lambda$ when $\Lambda \in \mathcal{LD}_\lambda(\beta)$, and it also extends the weight rule to all $n$-paths.  For $m \in [n]$ assign the weight $x_i$ to each easterly step of depth $i$ in the path $\Lambda_m$ and multiply these weights over its easterly steps.  Then also multiply the weights of the $n$ component paths of $\Lambda$ to produce a monomial, denoted $x^{\Theta(\Lambda)}$.  When the sinks of $\Lambda$ are the terminals from $(\lambda, \beta)$ in their usual order, the multivariate generating function $\sum_{  \Lambda \in \mathcal{LD}_\lambda(\beta)  } x^{\Theta(\Lambda)}$ is clearly our row bound sum $s_\lambda(\beta;x)$.

Let $j, l \geq 0, i, k \geq 1$, and set $u := l-j$.  The \emph{complete homogeneous symmetric function} $h_u(i,k;x)$ in the variables $x_i, x_{i+1}, ... , x_k$ is defined when $u < 0$ by $h_u(i,k;x) := 0$, and when $u \geq 0$ by $h_u(i,k;x) := \sum x_{t_1}\cdots x_{t_u}$, sum over $i \leq t_1 \leq ... \leq t_u \leq k$.  (When $i \leq k$ and $u = 0$ conventions imply $h_0(i,k;x) = 1$.)  This is the sum of the weights that are assigned to just one path as it varies over all paths from $(j,i)$ to $(l,k)$.

We next consider $n$-paths that use the same terminals, but in a permuted order, for their list of sinks.  Let $\pi$ be a permutation of $[n]$.  Let $\pi.(\lambda, \beta)$ denote the list of terminals $(\lambda_{\pi_1}+n-\pi_1, \beta_{\pi_1}), \\ (\lambda_{\pi_2}+n-\pi_2, \beta_{\pi_2}), ... , (\lambda_{\pi_n}+n-\pi_n, \beta_{\pi_n})$.  Let $\mathcal{LD}_\lambda(\beta;\pi)$ denote the set of disjoint $n$-paths $(\Lambda_1, ... , \Lambda_n)$ with respective sinks $\pi.(\lambda,\beta)$.  The terminal pair $(\lambda, \beta)$ is \emph{nonpermutable} \cite{GV} if $\mathcal{LD}_\lambda(\beta;\pi)$ is empty when $\pi$ is not the identity $(1,2,...,n)$.

Here is our nonskew version of Theorem 2.7.1 of \cite{St1}; as in Theorem 7.16.1 of \cite{St2} we have replaced the disjoint $n$-paths with the corresponding tableaux:

\begin{prop}\label{prop315.6}Let $\beta \in U_\lambda(n)$.  If the terminal pair $(\lambda, \beta)$ is nonpermutable, then the row bound sum $s_\lambda(\beta;x)$ is given by the $n \times n$ determinant $| h_{\lambda_j-j+i}(i,\beta_j;x) |$.  \end{prop}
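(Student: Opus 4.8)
The plan is to follow the classical Lindström–Gessel–Viennot sign-reversing involution argument, specialized to the flipped lattice set-up introduced above. First I would expand the proposed determinant $|h_{\lambda_j - j + i}(i,\beta_j;x)|$ as a signed sum over permutations $\pi$ of $[n]$: each term is $\operatorname{sgn}(\pi)\prod_{m=1}^n h_{\lambda_{\pi_m} - \pi_m + m}(m,\beta_{\pi_m};x)$. Using the observation recorded just before the statement — that $h_u(i,k;x)$ is exactly the sum of path weights $x^{\Theta(\cdot)}$ over all lattice paths from a point at depth $i$ to a point at depth $k$ whose horizontal displacement is $u$ — I would identify the $m$th factor as the generating function for the $m$th component path of an $n$-path $\Lambda$ running from the source $(n-m,m)$ to the terminal $(\lambda_{\pi_m}+n-\pi_m,\beta_{\pi_m})$, i.e.\ to the $m$th entry of $\pi.(\lambda,\beta)$. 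Multiplying out, the determinant equals $\sum_\pi \operatorname{sgn}(\pi)\sum_{\Lambda}x^{\Theta(\Lambda)}$, the sum over all (not necessarily disjoint) $n$-paths with sinks $\pi.(\lambda,\beta)$; one must check the displacement bookkeeping, namely that the source $(n-m,m)$ and sink $(\lambda_{\pi_m}+n-\pi_m,\beta_{\pi_m})$ are consistent with a lattice path of horizontal length $\lambda_{\pi_m}-\pi_m+m$ and that $h_u=0$ for $u<0$ correctly kills the impossible terms.

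Next I would construct the sign-reversing involution on the set of all $n$-paths appearing in this double sum that are \emph{not} disjoint. Given such a $\Lambda$, take the smallest index $m$ for which $\Lambda_m$ meets some other component, let $m'$ be the smallest index $>m$ with $\Lambda_m\cap\Lambda_{m'}\neq\emptyset$, find the first lattice point (in an agreed reading order, e.g.\ lexicographically in the flipped coordinates) at which they meet, and swap the two tails of $\Lambda_m$ and $\Lambda_{m'}$ beyond that point. This produces a new $n$-path $\Lambda'$ with the sinks of components $m$ and $m'$ interchanged, hence with underlying permutation $\pi\cdot(m\,m')$, so the sign flips; the total weight $x^{\Theta(\cdot)}$ is preserved because tail-swapping merely redistributes the same easterly steps among two components; and the map is an involution because the choice of $(m,m')$ and of the crossing point is unchanged by the swap (this is the standard verification). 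Consequently all intersecting $n$-paths cancel in pairs, and the determinant collapses to $\sum_\pi \operatorname{sgn}(\pi)\sum_{\Lambda\in\mathcal{LD}_\lambda(\beta;\pi)}x^{\Theta(\Lambda)}$.

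Finally I would invoke the nonpermutability hypothesis: $\mathcal{LD}_\lambda(\beta;\pi)=\emptyset$ for every $\pi\neq(1,2,\dots,n)$, so only the identity term survives, with sign $+1$, leaving $\sum_{\Lambda\in\mathcal{LD}_\lambda(\beta)}x^{\Theta(\Lambda)}$. By the remark made in the paragraph introducing weights — that this generating function over disjoint $n$-paths with the terminals of $(\lambda,\beta)$ in their native order equals $s_\lambda(\beta;x)$ — or equivalently by transporting through the weight-preserving bijection of Fact~\ref{fact315.5} between $\mathcal{LD}_\lambda(\beta)$ and $\mathcal{S}_\lambda(\beta)$, this equals the row bound sum $s_\lambda(\beta;x)$, as claimed.

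I expect the main obstacle to be the careful verification that the tail-swapping operation is genuinely well-defined and involutive in this particular flipped-coordinate model — in particular, that swapping at the distinguished crossing point does not create or destroy intersections among components other than $m$ and $m'$ in a way that would change the recursively-chosen pair $(m,m')$ or the chosen crossing point, and that a path from a source to a \emph{permuted} sink genuinely must intersect some other component when $\pi\neq\mathrm{id}$ only by virtue of the nonpermutability assumption (we are not reproving nonpermutability here, merely using it). A secondary point requiring care is the sign: one must confirm that interchanging the two sinks corresponds to composing $\pi$ with a transposition, so that $\operatorname{sgn}$ changes sign exactly once, which is immediate but should be stated. The weight-preservation and the displacement/degree bookkeeping are routine given the $h_u(i,k;x)$ interpretation already established in the text.
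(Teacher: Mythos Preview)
Your proposal is correct and follows the same Gessel--Viennot sign-reversing involution approach that the paper invokes. The paper does not spell out the involution itself; it simply cites Theorem~2.7.1 of \cite{St1} (recording only the displacement identity $(\lambda_j+n-j)-(n-i)=\lambda_j-j+i$) and remarks that that theorem was proved by exactly the signed tail-swap cancellation argument you describe.
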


\noindent To produce this expression with Theorem 2.7.1 of \cite{St1}, use the remark above that expressed $s_\lambda(\beta;x)$ as the $\mathcal{LD}_\lambda(\beta)$ generating function and note that $(\lambda_j+n-j)-(n-i) = \lambda_j-j+i$.  Theorem 2.7.1 was proved with a signed involution pairing cancellation argument, as in \cite{GV}.  We refer to this argument as the G-V method and to this determinant as the G-V determinant for $\beta$.

\section{Main results}\label{main}

As noted in Table 4.1, the technical definitions of the notions of ``gapless core'' and ``bounded by platform'' appearing in the results below are given in the next section.  Our main result combines our Propositions \ref{prop434.7} and \ref{prop581.6}:

\begin{thm}\label{theorem777.1}Let $\lambda$ be a partition and let $\beta$ be an upper $\lambda$-tuple.  The terminal pair $(\lambda, \beta)$ is nonpermutable if and only if $\beta$ is a gapless core $\lambda$-tuple that is bounded by its platform. \end{thm}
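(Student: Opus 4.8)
The plan is to prove the two directions separately, matching the paper's structure: the "only if" direction (nonpermutable $\Rightarrow$ gapless core and bounded by platform) will become Proposition~\ref{prop434.7}, and the "if" direction will become Proposition~\ref{prop581.6}. Throughout, I will work with the lattice-path model of Section~4, since nonpermutability is a statement about the emptiness of the sets $\mathcal{LD}_\lambda(\beta;\pi)$. The key translation is that a nontrivial permutation $\pi$ produces a nonempty $\mathcal{LD}_\lambda(\beta;\pi)$ precisely when one can route $n$ pairwise-disjoint paths from the fixed sources $(n-m,m)$ to the permuted terminals $\pi.(\lambda,\beta)$. Since it suffices to rule out (or exhibit) transpositions — any nontrivial $\pi$ can be reduced to a bad adjacent-in-value situation — the heart of the matter is understanding when two paths with sources $(n-m,m)$, $(n-m',m')$ (say $m<m'$) can be disjointly routed to the $\emph{swapped}$ terminals coming from indices $j$ and $j'$.

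For the "only if" direction I would argue contrapositively: assume $\beta$ is either not a gapless core $\lambda$-tuple or not bounded by its platform, and in each case construct an explicit nontrivial permutation $\pi$ together with a disjoint $n$-path using the terminals $\pi.(\lambda,\beta)$, thereby showing $(\lambda,\beta)$ is permutable. If $\beta \not\leq \Xi_\lambda(\beta)$, there is an index $i$ with $\beta_i > \xi_i$, meaning $\beta_i$ exceeds the critical entry governing its carrel; this slack should let me swap the sink of path $i$ with that of a neighboring path (within the same cliff, where $\lambda$ is constant so the strictified longitudes are consecutive) and still route disjointly. If $\beta$ fails to be a gapless core tuple, then by the characterization in Section~2 its critical list is not a flag critical list: there is some $h\in[r]$ with $y_{q_h} > y_k$ where $k := x_{f_{h+1}}$ is the leftmost critical index of carrel $h+1$. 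I would use this strict inequality across the cliff boundary to exhibit a permutation (essentially transposing the terminals at $q_h$ and at $k$, or a cyclic shift among the indices in between) for which the terminals can be reached by disjoint paths — the point being that a critical entry that is "too high" on the left lets a higher path sneak past a lower one on the right.

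For the "if" direction, assume $\beta \in UGC_\lambda(n)$ and $\beta \leq \Xi_\lambda(\beta)$, and suppose for contradiction that $\mathcal{LD}_\lambda(\beta;\pi) \neq \emptyset$ for some $\pi \neq \mathrm{id}$. Pick a pair of indices $j<j'$ that is inverted by $\pi$, i.e. whose terminals get assigned to sources "in the wrong order," and analyze the two corresponding paths. Disjointness forces an inequality between the longitudes and latitudes of the two swapped terminals; writing this out using the strictified longitudes $\lambda_j+n-j$ and the sink heights $\beta_j$, I would derive $\beta_j < \beta_{j'}$ together with a constraint relating $\beta_{j'}-\beta_j$ to $j'-j$ and to the column-length structure. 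Within a single cliff this is immediately impossible when $\beta$ is $R_\lambda$-increasing on the relevant carrel; across a cliff boundary the gapless-core (flag critical list) hypothesis plus the $\beta \leq \Xi_\lambda(\beta)$ hypothesis are exactly what is needed to contradict it — the flag condition controls how far a critical entry on the right can dip below one on the left, while the platform bound prevents $\beta$ from overshooting its critical entries. Combining these yields the contradiction, so $\pi$ must be the identity.

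I expect the main obstacle to be the bookkeeping in the "if" direction: reducing a general nontrivial permutation to a manageable local obstruction, and then correctly combining the two hypotheses (flag critical list and platform bound) in the across-a-cliff case, since each hypothesis only controls "half" of the potential obstruction and one must verify they jointly close the gap. A secondary subtlety, on the "only if" side, is checking that the permuted path configurations we write down really are disjoint $n$-paths in $\mathcal{LD}_\lambda(\beta;\pi)$ — i.e., that the remaining $n-2$ paths can be completed without new intersections — which should follow from a monotonicity/interpolation argument but needs to be stated carefully. It is plausible that both directions are cleaner when phrased in terms of the critical list and the maps $\Delta_\lambda$, $\Xi_\lambda$ rather than the raw entries of $\beta$, exploiting that $\Delta_\lambda(\beta)$ and $\Xi_\lambda(\beta)$ share $\beta$'s critical list and that $\mathcal{S}_\lambda(\beta)$ depends on $\beta$ only through related data.
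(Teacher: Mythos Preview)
Your overall architecture matches the paper's (necessity via contrapositive in two sub-cases, sufficiency by contradiction starting from a bad permutation), but two of your key tactical choices would not go through as written.

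\textbf{``If'' direction: the reduction to $\delta$ is missing.} You propose to pick an inverted pair and argue that ``within a single cliff this is immediately impossible when $\beta$ is $R_\lambda$-increasing on the relevant carrel.'' But $\beta$ is \emph{not} assumed to be $R_\lambda$-increasing: the hypothesis is only $\beta \in UGC_\lambda(n)\cap UBP_\lambda(n)$, and a gapless core tuple can have $\beta_i > \beta_{i+1}$ inside a carrel (e.g.\ the paper's running example $\beta=(2,7,5;\ldots)$). What is $R_\lambda$-increasing is $\delta := \Delta_\lambda(\beta)$. The paper bridges this via Lemma~\ref{lemma581.5}: using only the platform bound $\beta \leq \Xi_\lambda(\beta)$, it shows that \emph{every} component of any $\Lambda \in \mathcal{LD}_\lambda(\beta;\pi)$ must pass through the $\delta$-level terminal before dropping a vertical ``stilt'' to the $\beta$-level terminal. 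This lets one truncate all paths and work with the terminals $(\lambda,\delta)$ instead. Without this lemma your within-a-cliff case has no traction.

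\textbf{``If'' direction: the choice of inverted pair matters.} You take an arbitrary inverted pair $j<j'$; the paper instead takes a descent of $\pi^{-1}$, i.e.\ $i<k$ with $\pi_i=\pi_k+1=:m+1$, so the two assigned terminals are \emph{consecutive in value}. This consecutiveness is what drives the across-a-cliff argument: when the later path $\Lambda_k$ overshoots depth $\delta_{m+1}$, one gets $\delta_m>\delta_{m+1}$, which forces $m=q_h$, and then the gapless condition produces a contiguous staircase of terminals $(v,\delta_{m+1}),(v-1,\delta_{m+1}+1),\ldots,(v-s+1,\delta_m)$ that $\Lambda_k$ must cross. With a generic inverted pair there is no such staircase to hit, and the ``flag critical list plus platform bound jointly close the gap'' step you gesture at does not have an obvious formulation.

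\textbf{``Only if'' direction: cycles, not transpositions.} Your plan to ``swap the sink of path $i$ with that of a neighboring path'' is too optimistic. In both Lemmas~\ref{lemma434.5} and~\ref{lemma434.6} the paper locates indices $c<d$ (with $d$ the next critical index to the right of the violation at $c$) and performs a cyclic rewiring of all of $\Lambda_c,\ldots,\Lambda_d$: path $\Lambda_d'$ detours at depth $\delta_d+1$ over to the sink of $c$, while each $\Lambda_m'$ for $c\leq m<d$ turns south one longitude early and takes over the former stilt of $\Lambda_{m+1}$. Verifying disjointness uses that $\beta_i\leq\xi_i=\delta_d$ for $c<i\leq d$ (so the detour latitude $\delta_d+1$ is clear) and that the early right turns are coordinated along a staircase where $\lambda$ is constant on the carrel. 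A bare transposition of $c$ and $d$ would leave the intervening paths in place and typically create intersections. Your parenthetical ``or a cyclic shift among the indices in between'' is the right instinct, but it is the construction, not an alternative to it.
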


\noindent Since we will note that the gapless core $\lambda$-tuples that are bounded by their platforms are the upper $\lambda$-flags when $\lambda$ is strict, in that case this theorem says that the upper $\lambda$-flags are the \emph{only} upper $\lambda$-tuples which produce nonpermutable terminal pairs.  So in the strict $\lambda$ case this theorem provides the converse to Stanley's parenthetical remark in Theorem 2.7.1 of \cite{St1}.

Under the circumstances of the theorem we can employ the G-V method from Proposition \ref{prop315.6}:

\begin{cor}\label{cor777.2}Let $\lambda$ be a partition and let $\beta$ be an upper $\lambda$-tuple.  If $\beta$ is a gapless core $\lambda$-tuple that is bounded by its platform, then $s_\lambda(\beta;x) = | h_{\lambda_j-j+i}(i,\beta_j;x) |$. \end{cor}

\noindent The converse to this result is open; see Problem \ref{open}(i) below.

If $\beta$ is a gapless core $\lambda$-tuple that is not bounded by its platform, then the polynomial $s_\lambda(\beta;x)$ for such a $\beta$ can nonetheless be computed with a determinant.  The map $\Delta_\lambda$ used to convert $\beta$ to $\delta$ for the following result is also defined in the next section:

\begin{cor}\label{cor777.2.5}Let $\lambda$ be a partition and let $\beta$ be an upper $\lambda$-tuple.  Set $\delta := \Delta_\lambda(\beta)$.  If $\beta$ is a gapless core $\lambda$-tuple, then $s_\lambda(\beta;x) = | h_{\lambda_j-j+i}(i,\delta_j;x) |$. \end{cor}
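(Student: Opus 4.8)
The plan is to reduce the claim to Corollary \ref{cor777.2} by replacing $\beta$ with its $\lambda$-core $\delta := \Delta_\lambda(\beta)$, which lands inside $UGC_\lambda \cap UBP_\lambda(n)$, and then to argue separately that the polynomial $s_\lambda(\beta;x)$ is unchanged by this replacement. First I would verify that $\delta \in UGC_\lambda(n)$: by the original definition recalled in Section 2, an upper $\lambda$-tuple $\beta$ is a gapless core $\lambda$-tuple exactly when its $\lambda$-core $\Delta_\lambda(\beta)$ is a gapless $\lambda$-tuple, so $\delta \in UG_\lambda(n) \subseteq UGC_\lambda(n)$. Next, since $UG_\lambda(n) \subseteq UBP_\lambda(n)$ was established in Section 2 (via $UG_\lambda(n) \subseteq UI_\lambda(n) \subseteq UBP_\lambda(n)$), we get $\delta \in UGC_\lambda \cap UBP_\lambda(n)$. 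Applying Corollary \ref{cor777.2} to $\delta$ then yields $s_\lambda(\delta;x) = |\,h_{\lambda_j-j+i}(i,\delta_j;x)\,|$, which is the right-hand side of the desired identity.

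The remaining point is the equality $s_\lambda(\beta;x) = s_\lambda(\delta;x)$, i.e. that passing to the $\lambda$-core does not change the row bound sum. This is exactly the kind of fact promised by the quotes from \cite{PW} mentioned in Section 8 and foreshadowed in the remark after Corollary \ref{cor777.2}: since $\beta \in UGC_\lambda(n)$, the set $\mathcal{S}_\lambda(\beta)$ of row bound tableaux coincides with $\mathcal{S}_\lambda(\delta)$, because $\Delta_\lambda$ is constructed precisely so that the binding row bounds (the critical pairs) are preserved while slack entries are lowered to the tightest values still compatible with the same tableau set. Concretely, $\beta$ and $\delta$ share the same $\lambda$-critical list, and a straightforward induction up the carrels shows that any $T \in \mathcal{T}_\lambda$ satisfies $T_j(i) \le \beta_i$ for all relevant $(j,i)$ if and only if it satisfies $T_j(i) \le \delta_i$; the forward direction uses $\delta \le \beta$, and the reverse direction uses the column-strictness of tableaux together with the recursive definition of $\delta_i = \beta_x - (x-i)$ on each staircase descending from a critical index $x$. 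Hence $s_\lambda(\beta;x) = \sum_{T \in \mathcal{S}_\lambda(\beta)} x^{\Theta(T)} = \sum_{T \in \mathcal{S}_\lambda(\delta)} x^{\Theta(T)} = s_\lambda(\delta;x)$.

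Combining the two parts gives $s_\lambda(\beta;x) = s_\lambda(\delta;x) = |\,h_{\lambda_j-j+i}(i,\delta_j;x)\,|$, as claimed. I expect the main obstacle to be the equality $\mathcal{S}_\lambda(\beta) = \mathcal{S}_\lambda(\delta)$ — more precisely, cleanly citing or re-deriving it from \cite{PW}: one must confirm that the predecessor paper's results (the characterization of when two $\lambda$-tuples are equivalent in the sense of Section 8, together with the fact that $\beta$ and $\Delta_\lambda(\beta)$ are equivalent when $\beta \in UGC_\lambda(n)$) apply verbatim here, rather than only under the stronger flag hypothesis. Once that equivalence is in hand, the rest is immediate from Corollary \ref{cor777.2} and the membership bookkeeping $\delta \in UG_\lambda(n) \subseteq UGC_\lambda \cap UBP_\lambda(n)$.
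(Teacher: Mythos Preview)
Your proposal is correct and is exactly the argument the paper intends: the remark preceding Corollary~\ref{cor777.2.5} together with the facts quoted in Section~8 (in particular that $\beta$ and $\Delta_\lambda(\beta)$ share the same critical list, hence $\mathcal{S}_\lambda(\beta)=\mathcal{S}_\lambda(\delta)$, and that $\delta\in UG_\lambda(n)\subseteq UGC_\lambda(n)\cap UBP_\lambda(n)$) reduces the claim to Corollary~\ref{cor777.2} applied to $\delta$. One small slip: in your sketch of $\mathcal{S}_\lambda(\beta)=\mathcal{S}_\lambda(\delta)$ you have the directions reversed---the inequality $\delta\le\beta$ gives the implication $T_j(i)\le\delta_i\Rightarrow T_j(i)\le\beta_i$, while it is the \emph{other} inclusion that needs column-strictness and the staircase description of $\delta$; since you ultimately cite \cite{PW} for this equivalence (which indeed holds for all of $U_\lambda(n)$, not just flags), this does not affect the validity of your argument.
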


\noindent The proof of this corollary is contained in the proof of the more general Corollary \ref{fctinterval}.

\begin{figure}[h!]
\begin{center}
\begin{tabular}{ccc}
$\lambda$-Tuples and their sets & \hspace{9mm} Section defined  & Terminology  \\  \hline
\color{white}asdf & & \\
$\beta \in U_\lambda(n)$ & \hspace{9mm} 2 &  upper $\lambda$-tuple \\
$\varphi \in UF_\lambda(n)$  & \hspace{9mm} 2 &  upper $\lambda$-flag \\
$\alpha \in UI_\lambda(n)$  & \hspace{9mm} 2 &  $\lambda$-increasing upper tuple \\
\vspace{-.65pc}\\
$\gamma \in UG_\lambda(n)$  & \hspace{9mm} 5 &  gapless $\lambda$-tuple \\
$\eta \in UGC_\lambda(n)$  & \hspace{9mm} 5 &  gapless core $\lambda$-tuple \\
$\xi \in UBP_\lambda(n)$  & \hspace{9mm} 5 &  $\lambda$-tuple bounded by platform
\end{tabular}
\caption*{Table 4.1.  Kinds of upper $R_\lambda$-tuples}
\end{center}
\end{figure}

\vspace{-1.5pc}

\section{Advanced $\mathbf{\emph{R}}$-tuples}\label{advdefns}

As in \cite{PW2} and \cite{PW3}, we first distill the crucial information from an upper $R$-tuple $\beta$ into a skeletal ``critical'' substructure.  Doing this was (and still is) motivated by the tableau considerations that are presented in the last paragraph of this section.  As we distill this information we define two functions from $U_R(n)$ to $U_R(n)$.  To preview, scanning an $R$-tuple $\beta$ within each of its carrels from the right, an entry of it will be a ``critical entry'' if it is either the rightmost entry in its carrel, or if it is smaller than the closest critical entry to its right by an amount that exceeds its distance from that critical entry.  Launching a running example, take $n := 9, R := \{3, 8 \}$, and $\beta := (2,7,5 \hspace{1mm}|\hspace{1mm} 8,6,6,9,9\hspace{1mm}|\hspace{1mm}9)$.  Here there are $r+1 = 3$ carrels; see Figure 5.1.  There the entries of this $\beta$ are denoted with dots.  We will define the \emph{$R$-core} and \emph{$R$-platform} maps $\Delta_R$ and $\Xi_R$ on $U_R(n)$ by constructing their images $\delta$ and $\xi$ for $\beta$.  The entries of $\delta$ and $\xi$ for our example will be denoted with dashes in the left and right parts of Figure 5.1.

\begin{figure}[h!]
  \begin{center}
    \includegraphics[scale=.95]{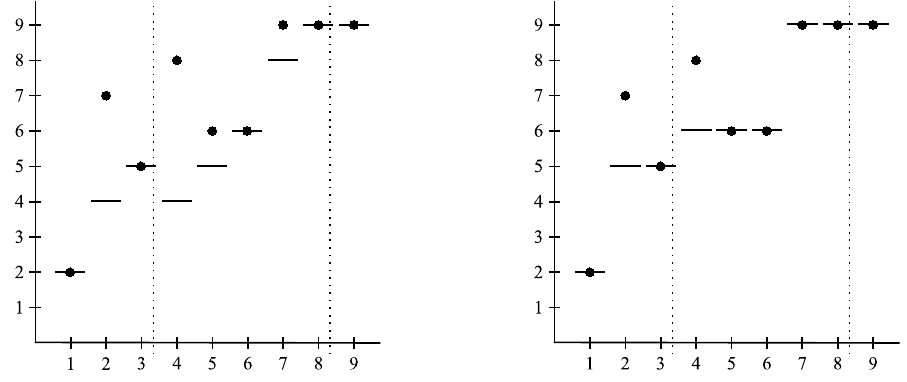}
    \caption*{Figure 5.1.  Applications of the maps $\Delta_R$ and $\Xi_R$ to our example $\beta \in U_R(9)$.}
  \end{center}
\end{figure}

Let $\beta \in U_R(n)$ and $h \in [r+1]$.  The \emph{rightmost critical index of $\beta$ in the $h^{th}$ carrel} is $q_h$.  Set $x := q_h$.  Scan the rest of the $h^{th}$ carrel $(q_{h-1}, q_h]$ from the right.  If it exists, the \emph{next critical index to the left} is $x'$, where $q_{h-1} < x' < x$ is maximal such that $\beta_{x} - \beta_{x'} > x - x'$.  Otherwise, the next critical index to the left is $x' := q_{h-1}$.  Here $q_{h-1} = 0$ occurs when $h = 1$.  Now set $x := x'$ and iterate this right-to-left scanning procedure until $x' := q_{h-1}$.  The intervals $(x', x]$ are \emph{critical intervals}; these are subintervals of the $h^{th}$ carrel.  In the second carrel in our example, the next critical index to the left after the initial (rightmost) critical index $8$ is $6$.  For $i \in (x', x]$, define $\delta_i := \beta_x - (x-i)$ and $\xi_i := \beta_x$.  Continuing to work within the second carrel in our example, we have $\delta_i = 9 - (8-i)$ and $\xi_i = 9$ for $6 < i \leq 8$.  If $x$ is a nonzero critical index, we call $\beta_x$ a \emph{critical entry}.  Overall, in our example we write the list of  nonzero critical indices as $\{ 1, 3 \hspace{1mm} | \hspace{1mm} 6, 8 \hspace{1mm} | \hspace{1mm} 9 \}$ and the list of corresponding critical entries as $\{ 2, 5 \hspace{1mm} | \hspace{1mm} 6, 9 \hspace{1mm} | \hspace{1mm} 9 \}$.  We have defined the \emph{$R$-core} $\Delta_R(\beta) := \delta$ \emph{of $\beta$} and the \emph{$R$-platform} $\Xi_R(\beta) := \xi$ \emph{of $\beta$}.  In our example $\delta = (2, 4, 5 \hspace{1mm} | \hspace{1mm} 4, 5, 6, 8, 9 \hspace{1mm} | \hspace{1mm} 9 )$ and $\xi = (2, 5, 5 \hspace{1mm} | \hspace{1mm} 6, 6, 6, 9, 9 \hspace{1mm} | \hspace{1mm} 9 )$;  see Table 6.1 for a larger example.  (The $R$-ceiling map of \cite{PW3} is the restriction of this $R$-platform map to a subset $UG_R(n)$ of $U_R(n)$.)

An $R$-\emph{tuple bounded by its platform} is an upper $R$-tuple $\beta$ such that  $\beta \leq \Xi_R(\beta)$.  Let $UBP_R(n) \subseteq U_R(n) $ denote the set of upper $R$-tuples bounded by their platforms.  The example $\beta$ above is not bounded by its platform, since $\beta_i > \xi_i$ for $i = 2$ and $4$.  Here we summarize several aspects of, and a few immediate consequences from, the definitions above:

\begin{fact}\label{critical}Let $\beta \in U_R(n)$.  Set $\delta := \Delta_R(\beta)$ and $\xi := \Xi_R(\beta)$.  Let $x', x \in [0,n]$ be such that $(x', x]$ is a critical interval for $\beta$.

\vspace{.5pc}\noindent (i) $\delta_x = \beta_x = \xi_x$.

\vspace{.5pc}\noindent (ii) For $i \in (x', x)$ one has $\delta_{i} = \delta_{i+1} - 1$.  Hence $\delta \in UI_R(n)$.

\vspace{.5pc}\noindent (iii) For $i \in (x', x)$ one has $\xi_i = \xi_x$.  Hence $\xi \in UF_R(n)$.

\vspace{.5pc}\noindent (iv) $\delta \leq \beta$ and $\delta \leq \xi$.

\vspace{.5pc}\noindent (v) If $\beta \in UBP_R(n)$, then $\delta \leq \beta \leq \xi$.

\vspace{.5pc}\noindent (vi) $\Delta_R(\delta) = \delta$ and $\Xi_R(\xi) = \xi$.  \end{fact}

\noindent Hence we can view the $R$-tuples $\delta$ and $\xi$ as being concatenations respectively of ``staircases'' and ``plateaus'' over the critical intervals for $\beta$.

It is notable when the rightmost critical entry in each carrel (which is automatically the last entry of the carrel) does not exceed the leftmost critical entry in the next carrel:  A \emph{gapless $R$-tuple} is an $R$-increasing upper tuple $\gamma$ such that for $h \in [r]$ we have $\gamma_{q_h} \leq \gamma_x$, where $x$ is the smallest critical index larger than $q_h$.  Let $UG_R(n) \subseteq UI_R(n)$ denote the set of gapless $R$-tuples.  The example $\delta$ above is a gapless $R$-tuple.  More generally, a \emph{gapless core $R$-tuple} is \emph{any} upper $R$-tuple $\eta$ such that for $h \in [r]$ we have $\eta_{q_h} \leq \eta_x$, where $x$ is the smallest critical index larger than $q_h$.  Let $UGC_R(n) \subseteq U_R(n)$ denote the set of gapless core $R$-tuples.  The example $\beta$ above is a gapless core $R$-tuple.  The following two facts explain these terminologies.  The routine verifications appeared as Parts (iii) and (ii) of Proposition 4.2 in \cite{PW3}.

\begin{fact}\label{gapless}

\vspace{.5pc}\noindent (i) Let $\gamma \in UI_R(n)$.  It is the case that $\gamma$ is a gapless $R$-tuple if and only if:  Whenever there exists $h \in [r]$ with $\gamma_{q_h} > \gamma_{q_h+1}$, then $\gamma_{q_h} - \gamma_{q_h+1} + 1 =: s \leq q_{h+1} - q_h$ and the first $s$ entries of the $(h+1)^{st}$ carrel $(q_h, q_{h+1} ]$ are $\gamma_{q_h}-s+1, \gamma_{q_h}-s+2, ... , \gamma_{q_h}$.

\vspace{.5pc}\noindent (ii) Let $\beta \in U_R(n)$.  Then $\beta \in UGC_R(n)$ if and only if $\Delta_R(\beta) \in UG_R(n)$. \end{fact}

\noindent Part (i) can be re-expressed as: whenever $\gamma_{q_h} > \gamma_{q_{h}+1}$, the leftmost staircase within the $(h+1)^{st}$ carrel must contain an entry equal to $\gamma_{q_h}$ (and hence there are no ``gaps'').

\vspace{.5pc}
\begin{figure}[h!]
\begin{center}
\setlength{\tabcolsep}{5pt}
\begin{tabular}{ccccc}
$UG$ & $\subseteq$ & $UI$ & $\subseteq$ & $UBP$\\[3pt]
 &  & $UF$ & $\subseteq$ & $UBP$ \\[3pt]
 &  & $UF$ & $\subseteq$ & $UGC$ \\[3pt]
 $UG$ & & $\subseteq$ & & $UGC$ \\[3pt]
\end{tabular}
\end{center}
\end{figure}

\vspace{-1pc}\centerline{Table 5.1.  Containments of the form $UX_R(n) \subseteq UY_R(n)$ for sets of upper $R$-tuples.}

\vspace{.5pc}

The five containments displayed in Table 5.1 follow from the definitions and the two facts; transitivity also yields the sixth containment $UG_R(n) \subseteq UBP_R(n)$.    When all column lengths are distinct, that is when $R = [n-1]$, one has $UBP_R(n) = UI_R(n) = U_R(n)$ and $UGC_R(n) = UG_R(n) = UF_R(n)$.  Hence $UGC_R(n) \cap UBP_R(n) = UF_R(n)$ here.

We relate the concepts above to the tableaux we are considering.  Given our fixed partition $\lambda$, find its set $R_\lambda$ of distinct column lengths.  Rewrite the subscript `$R_\lambda$' as `$\lambda$'.  Fix $\beta \in U_\lambda(n)$.  As was explained in Section 8 of \cite{PW3}, under value-wise comparison the row bound set $\mathcal{S}_\lambda(\beta)$ of tableaux has a unique maximal element $Q_\lambda(\beta) =: Q$.  To convert the tableau in Figure 3.1 to the maximal tableau $Q$ for that example first increase the row end value `5' to `6' and then increase nearly all of the values in each row to the row end value for that row.  However, the first values in the fourth and fifth rows are `5' and `6' and the first four values in the first row are `2'.  Visualizing how $Q$ is determined from $\beta$ motivated our definition of the critical entries of $\beta$:  The bottom value of $Q$ in a cliff of the shape $\lambda$ is the carrel-ending critical entry of $\beta$ in the corresponding carrel.  Moving up within that cliff, the semistandard condition will decrement the ending values in rows of $Q$ by 1 at each higher row until there is a precipitous drop in the given bounds in $\beta$.  At such a juncture the next critical entry to the left in $\beta$ is present in $Q$ as a row ending value.  In this manner it is seen that $\Delta_\lambda(\beta)$ is the $R_\lambda$-tuple of these row ending values of $Q$.

\section{Necessary condition for nonpermutability}\label{necc}

The two lemmas obtained here give necessary conditions for  an upper $R$-tuple $\beta$ to yield a terminal pair $(\lambda, \beta)$ that is nonpermutable.

For a determinant example pertinent to the first lemma, take $n := 3, \lambda := (1,1,0)$, and $\beta := (3,2,3)$.  Here $\Delta_\lambda(\beta) = (1,2 \hspace{1mm} | \hspace{1mm} 3)$ and $\Xi_\lambda(\beta) = (2, 2 \hspace{1mm} | \hspace{1mm} 3)$.  Note that $\beta \in UGC_\lambda(n) \backslash UBP_\lambda(n)$, and so this lemma will imply that $(\lambda, \beta)$ is not nonpermutable.  Here $s_\lambda(\beta;x_1,x_2,x_3) = x_1x_2$, but the G-V determinant of Proposition \ref{prop315.6} evaluates to $x_1x_2 - x_3^2$.

\begin{lem}\label{lemma434.5}Let $\beta \in U_\lambda(n)$.  If $\beta \notin UBP_\lambda(n)$, then $(\lambda, \beta)$ fails to be nonpermutable.  \end{lem}

For a determinant example pertinent to the second lemma, take $n := 3, \lambda := (2,1,0)$, and $\beta := (3,2,3)$.  Here $\beta = \Delta_\lambda(\beta) = \Xi_\lambda(\beta)$ since $\lambda$ is strict. Note that $\beta \in UBP_\lambda(n) \backslash UGC_\lambda(n)$, and so this lemma will imply that $(\lambda, \beta)$ is not nonpermutable.  Here $s_\lambda(\beta;x,y,z) = x_1^2x_2 + x_1x_2^2 + x_1x_2x_3$, but the G-V determinant of Proposition \ref{prop315.6} evaluates to $x_1^2x_2 + x_1x_2^2 + x_1x_2x_3 - x_3^3$.

\begin{lem}\label{lemma434.6}Let $\beta \in U_\lambda(n)$.  If $\beta \notin UGC_\lambda(n)$, then $(\lambda, \beta)$ fails to be nonpermutable.  \end{lem}

Combining the contrapositives of these two lemmas gives:

\begin{prop}\label{prop434.7}Let $\beta \in U_\lambda(n)$.  If $(\lambda, \beta)$ is nonpermutable, then $\beta \in UGC_\lambda(n) \cap UBP_\lambda(n)$.  \end{prop}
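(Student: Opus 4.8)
The plan is to read off Proposition \ref{prop434.7} as the logical conjunction of the contrapositives of Lemmas \ref{lemma434.5} and \ref{lemma434.6}. Concretely: let $\beta \in U_\lambda(n)$ and suppose $(\lambda, \beta)$ is nonpermutable. Since nonpermutability fails whenever $\beta \notin UBP_\lambda(n)$ (Lemma \ref{lemma434.5}), we must have $\beta \in UBP_\lambda(n)$; and since nonpermutability fails whenever $\beta \notin UGC_\lambda(n)$ (Lemma \ref{lemma434.6}), we must have $\beta \in UGC_\lambda(n)$. Hence $\beta \in UGC_\lambda(n) \cap UBP_\lambda(n)$, which is the assertion. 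No computation is needed at this level; the proposition is a bookkeeping step packaging the two lemmas.

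All of the actual content sits in the two lemmas, whose common strategy is to produce, from a $\beta$ violating one of the two conditions, an explicit disjoint $n$-path whose sinks form a nontrivial permutation $\pi$ of the original terminals, thereby exhibiting a nonempty $\mathcal{LD}_\lambda(\beta;\pi)$ with $\pi \neq (1,2,\ldots,n)$. One starts from the canonical disjoint $n$-path $\Lambda$ attached to $\delta := \Delta_\lambda(\beta)$ (the construction set up at the beginning of Section 6), pinpoints the offending index $c$ and the governing critical index $d$ just to its right, reroutes the tail of $\Lambda_d$ so that component $d$ terminates at the terminal originally assigned to component $c$, and shifts each $\Lambda_m$ with $c \le m < d$ one easterly step early onto the ``stilt'' vacated by $\Lambda_{m+1}$. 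The inequality $\beta_i \le \xi_i = \delta_d$ for $i \in (c,d]$, where $\xi := \Xi_\lambda(\beta)$ — supplied by the hypothesis $\beta \in UBP_\lambda(n)$ in the Lemma \ref{lemma434.6} case, and by the maximality of $c$ in the Lemma \ref{lemma434.5} case — is precisely what forces the rerouted $\Lambda_d'$ to clear the shifted block, so that the $d-c+1$ modified components are pairwise disjoint and disjoint from the untouched components.

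The delicate point, and the only place where one must work rather than cite, is the verification that the freshly used southerly edge on the longitude $\lambda_d + n - d - 1$ is not already occupied by $\Lambda_{d+1}$ (or a later path). This bifurcates into $d = q_h$, where $\lambda_d > \lambda_{d+1}$ pushes $\Lambda_{d+1}$ strictly east of that longitude, and $d < q_h$, where $\delta_d + 1 < \delta_{d+1}$ (because $d$ is a critical index) keeps $\Lambda_d'$ strictly north of $\Lambda_{d+1}$ on it. Since those disjointness checks are carried out inside Lemmas \ref{lemma434.5} and \ref{lemma434.6}, the proof of the proposition itself terminates the moment the two contrapositives are combined.
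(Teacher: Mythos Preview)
Your proof is correct and matches the paper's approach exactly: the paper's proof of Proposition \ref{prop434.7} is the single sentence ``Combine the contrapositives of these two lemmas.'' Your additional paragraphs summarizing the rewiring construction inside Lemmas \ref{lemma434.5} and \ref{lemma434.6} are accurate and not required for the proposition itself, but they do faithfully reflect what those lemmas actually do.
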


Fix $\beta \in U_\lambda(n)$.  We prepare for the proofs of both lemmas by constructing some particular $n$-paths $\Lambda$ for the given $\beta$.  To see that each of these $\Lambda$ is in $\mathcal{LD}_\lambda(\beta)$, we first describe its corresponding (clearly semistandard) tableau $T$.  Launching a running example, take $n=16$ and $\lambda = (7^3\hspace{1mm} |\hspace{1mm}  5^8 \hspace{1mm} | \hspace{1mm}  3^2 \hspace{1mm} | \hspace{1mm}  1^2 \hspace{1mm} | \hspace{1mm}  0^1)$.  Here $R_\lambda = \{ 3, 11, 13, 15 \}$ and so $r = 4$ and $q_r = 15$.  Let $\beta$ be as displayed in Table 6.1.    Set $\delta := \Delta_\lambda(\beta)$.  See Table 6.1 for the $\delta$ in the example.  Consult Figure 6.1 for the tableau $T$ being constructed in the example case below.

\begin{figure}[h!]\begin{center}
\begin{tabular}{c|ccc|cccccccc|cc|cc|c}
$i$ & 1 & 2 & 3 & 4 & 5 & 6 & 7 & 8 & 9 & 10 & 11 &  12 & 13 & 14 & 15 & 16 \\ \hline
$\beta_i$ & 5 & 5 & 8 & 5 & 12 & 13 & 9 & 11 & 11 & 15 & 15 &  16 & 16 & 14 & 16 & 16 \\
$\delta_i$ & 4 & 5 & 8 & 5 & 7 & 8 & 9 & 10 & 11 & 14 & 15 & 15 & 16 & 14 & 16 & 16 \\
$\xi_i$ & 5 & 5 & 8 & 5 & 11 & 11 & 11 & 11 & 11 & 15 & 15 & 16 & 16 & 14 & 16 & 16 \\
\end{tabular}
\caption*{Table 6.1.  An example $\beta$, with $\delta := \Delta_\lambda(\beta)$ and $\xi := \Xi_\lambda(\beta)$}
\end{center}
\end{figure}

We construct one $n$-path $\Lambda$ for each $d \in [q_r]$.  As $d$ varies, these $\Lambda$ will differ by the location of a transition from the most elementary paths for ``early'' values of $i$ to some more carefully crafted paths $\Lambda_i$ for ``middle'' values of $i$.  In our example take $ d = 9 \in [15]$.  For $i \in (0,d-1]$ set $T_j(i) := i$ for $j \in [\lambda_i]$.  These values are as small as possible.  The first $d-1$ component paths $\Lambda_i$ of $\Lambda$ corresponding to these top $d-1$ rows of $T$ are described with the top entry in Table 6.2.  Figure 6.2 uses dotted lines to display $\Lambda_3, \Lambda_4, ... , \Lambda_{16}$ for our example $\beta$; of these $\Lambda_3, \Lambda_4, ... , \Lambda_8$ are early ``elementary'' paths.  On the ending longitudes of the paths, the big (green) dots in this figure denote the depths $\delta_i$ that are taken from the

\renewcommand{\arraystretch}{0.8}
\begin{figure}[h!]
\begin{center}
\begin{tabular}{ccccccc}
1 & 1 & 1 & 1 & 1 & 1 & 1\\[3pt]
2 & 2 & 2 & 2 & 2 & 2 & 2\\[3pt]
3 & 3 & 3 & 3 & 3 & 3 & 3\\[3pt]
4 & 4 & 4 & 4 & 4\\[3pt]
5 & 5 & 5 & 5 & 5\\[3pt]
6 & 6 & 6 & 6 & 6\\[3pt]
7 & 7 & 7 & 7 & 7\\[3pt]
8 & 8 & 8 & 8 & 8\\[3pt]
9 & 9 & 9 & 11 & 11\\[3pt]
10 & 10 & 10 & 14 & 14\\[3pt]
11 & 11 & 11 & 15 & 15 \\[3pt]
12 & 15 & 15 \\[3pt]
13 & 16 & 16 \\[3pt]
14\\[3pt]
16\\[3pt]
\end{tabular}
\caption*{Figure 6.1.  Example tableau of shape $(7,7,7 \hspace{1mm} |\hspace{1mm} 5,5,5,5,5,5,5,5 \hspace{1mm} |\hspace{1mm} 3,3 \hspace{1mm} | \hspace{1mm} 1, 1\hspace{1mm} |\hspace{1mm} 0)$}
\end{center}
\end{figure}

\noindent   $\lambda$-core $\delta$.  The more nuanced paths in the middle region are indexed by $i \in (d-1, q_r]$.  Let $h \in [r]$ be such that $i \in (q_{h-1}, q_h]$.  For $j \in  [\lambda_{q_{h+1}}]$, above the values in the next (shorter) rows of $T$, set $T_j(i):=i$.  These values are still as small as possible.  For $j \in (\lambda_{q_{h+1}}, \lambda_{q_h}]$ set $T_j(i) := \delta_i$.  These ``overhanging'' values are as large as possible for the given $\beta$.  The nuanced paths $\Lambda_i$ corresponding to these middle rows of $T$ are described with the middle entry in Table 6.2.  Continuing the example, the lowest seven non-null dotted line paths $\Lambda_9, ... , \Lambda_{15}$ are of this nuanced type.  For all values of $d$ we ``fill out'' to an $n$-path $\Lambda$ in the last carrel of $\beta$:  For the ``late'' values $i \in (q_r,n]$ set $T_j(i) := \delta_i$ $(=i)$ for $j \in [\lambda_i]$.  These values are as large as possible for the given $\beta$;  the paths $\Lambda_i$ corresponding to these bottom rows are described with the bottom entry in Table 6.2.  In the example, only the last (null) path is of this third type.

\renewcommand{\arraystretch}{1}

\begin{figure}[h!]
\begin{center}
\begin{tabular}{c|c}
$i \in$ & $\Lambda_i$ \\[2pt]
\hline $(0,d-1]$ & $(n-i,i) \rightarrow (\lambda_i + n-i,i) \downarrow (\lambda_i+n-i, \delta_i) \downarrow (\lambda_i+n-i, \beta_i)$ \\[4.5pt]
$(d-1, q_r]$ &  $(n-i,i) \rightarrow (\lambda_{q_{h+1}}+n-i,i) \downarrow (\lambda_{q_{h+1}}+n-i, \delta_i) \rightarrow (\lambda_i + n - i, \delta_i) \downarrow (\lambda_i+n-i, \beta_i)$ \\[4.5pt]
$(q_r, n]$ & $(n-i,i) \rightarrow (\lambda_i+n-i, \delta_i) \downarrow (\lambda_i+n-i, \beta_i)$ \\[4.5pt]
\end{tabular}
\caption*{Table 6.2.  Original paths for the proofs of Lemmas 6.1 and 6.2}
\end{center}
\end{figure}

\begin{proof}[of Lemma 6.1] Let $\beta \in U_\lambda(n) \backslash UBP_\lambda(n)$.  By rewiring some of the paths within one of the $n$-paths constructed above, we will construct a \emph{disjoint} $n$-path $\Lambda^\prime := (\Lambda_1^\prime, ... , \Lambda_n^\prime)$ whose respective sinks form a nontrivial permutation $\pi$ of the original ordered terminals. Set $\delta := \Delta_\lambda(\beta)$ and $\xi := \Xi_\lambda(\beta)$.  Since $\xi_i = n$ for $i \in (q_r,n]$, the failure of boundedness for $\beta$ cannot occur in this last carrel of the $\lambda$-tuple $\beta$.  See Table 6.1 for $\xi$ in the running example.  Let $h \in [r]$ be such that there exists $t \in (q_{h-1}, q_h]$ such that $\beta_t > \xi_t$, and then let $c \in (q_{h-1}, q_h]$ be maximal such that $\beta_c > \xi_c$.  So $c$ is not a critical index of $\beta$ by Fact \ref{critical}(i), since $\beta_c \neq \xi_c$.  Let $d$ be the smallest critical index in $(q_{h-1},q_h]$ such that $c < d$.  In the example we have $\beta_6 = 13 > 11 = \xi_6$ with $h = 2, c = 6$, and $d = 9$.  Here $\beta_c > \xi_c = \xi_d = \beta_d = \delta_d$, which implies $\beta_c \geq \delta_d + 1$.  Since $d \leq q_r$ we have $\lambda_d \geq 1$, which implies $\lambda_d + n - d - 1 \geq 0$.

\begin{figure}[h!]
  \begin{center}
    \includegraphics[scale=1]{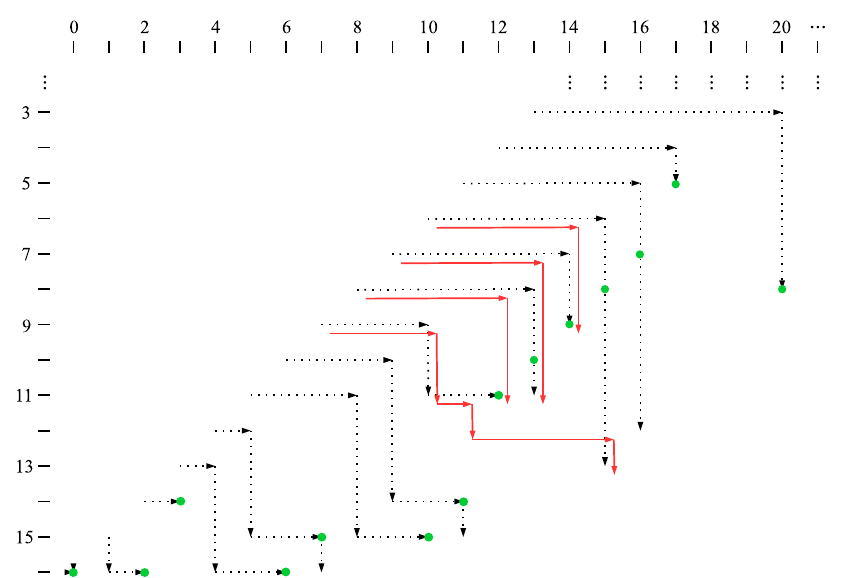}
    \caption*{Figure 6.2.  Rewiring four component paths produces a nonpermutability violation.}
  \end{center}
\end{figure}

Now refer to the $n$-path $\Lambda$ constructed above for this $d \in [q_r]$.  We will take advantage of the ``excessively'' deep sink for $\Lambda_c$.  We begin to construct $\Lambda^\prime$ by modifying the last part of $\Lambda_d$ to produce a ``rewired'' path $\Lambda_d^\prime$.  This new path $\Lambda_d'$ will sink at the sink for the old path $\Lambda_c$ after ``swooping'' beneath the terminals that are currently the sinks for $\Lambda_{c+1}, \Lambda_{c+2}, ... , \Lambda_d$.  Those terminals will now be used respectively as the sinks for the rewired $\Lambda_c', \Lambda_{c+1}', ... , \Lambda_{d-1}'$.  Look at the southernmost solid (red) path in Figure 6.2.  Rather than finishing with $... \downarrow (\lambda_{q_{h+1}}+n-d, \delta_d) \rightarrow (\lambda_d + n-d, \delta_d) = (\lambda_d+n-d, \beta_d)$, the rewired $\Lambda_d^\prime$ finishes with $...$ $\downarrow (\lambda_{q_{h+1}}+n-d, \delta_d) \rightarrow (\lambda_d + n-d-1, \delta_d) \downarrow (\lambda_d+n-d-1,$ $\delta_d+1) \rightarrow (\lambda_d + n-c, \delta_d+1) \downarrow (\lambda_c+n-c, \beta_c)$.  Here $\Lambda_d^\prime$ stops one unit short of the sink of $\Lambda_d$ which is at  $(\lambda_d+n-d, \delta_d)$, then goes one unit to the south, then turns left onto the latitude $\delta_d + 1$ and goes $d-c+1$ units to the east, and then turns right to go straight south until it sinks at $(\lambda_c+n-c, \beta_c)$.  This was the sink of $\Lambda_c$.  This new southerly edge $(\lambda_d+n-d-1, \delta_d) \downarrow (\lambda_d+n-d-1, \delta_d+1)$ in $\Lambda_d'$ is not in use by $\Lambda_{d+1}$ (or a later path):  If $d = q_h$, then the longitude at $(\lambda_d+n-d)-1$ is not used by any component of $\Lambda$ since $\lambda_d > \lambda_{d+1}$ here implies that this longitude is strictly to the east of the longitude $\lambda_{d+1}+n-d-1$ on which $\Lambda_{d+1}$ sinks.  If $d < q_h$, note that $\delta_d+1 < \delta_{d+1}$ because $d$ is a critical index.  So here the southernmost point reached by $\Lambda_d^\prime$ on its new briefly used longitude at $\lambda_d+n-d-1$ is strictly to the north of the northernmost point on this longitude used by $\Lambda_{d+1}$, which descends to the depth $\delta_{d+1}$ on the longitude $\lambda_{q_{h+1}}+n-d-1$ to the west.  Therefore $\Lambda_d'$ does not intersect $\Lambda_{d+1}$.

For either case for $d$, for $m = d-1, d-2, ... , c$ we next successively modify the finishes of $\Lambda_{d-1}, \Lambda_{d-2},$ $ ... , \Lambda_c$ to respectively produce finishes for the rewired $\Lambda_{d-1}^\prime, \Lambda_{d-2}^\prime, ... , \Lambda_c^\prime$.  Look at the other three solid (red) paths in Figure 6.2.  Let $m \in [c,d)$.  Rather than travelling the elementary path $(n-m, m) \rightarrow (\lambda_m+n-m,m) \downarrow (\lambda_m+n-m, \delta_m) \downarrow (\lambda_m+n-m, \beta_m)$, the rewired $\Lambda_m^\prime$ travels $(n-m, m) \rightarrow (\lambda_m+n-m-1,m) \downarrow (\lambda_m+n-m-1, \delta_{m+1}) \downarrow (\lambda_m+n-m-1, \beta_{m+1})$.  Here $\Lambda_m^\prime$ is finishing by turning right one step early, using one or more new southerly step(s) to reach $(\lambda_{m+1} - n - m - 1, \delta_{m+1})$, and then adopting the final (possibly empty) ``stilt'' that $\Lambda_{m+1}$ had been using to finish.  Note that $\Lambda_d'$ reaches $(\lambda_d + n-d-1, \delta_d+1)$.  Since $\lambda_{d-1} = \lambda_d$, the new southerly steps used by $\Lambda_{d-1}'$ are on the longitude $\lambda_d + n - d$, on which $\Lambda_{d-1}'$ sinks at depth $\beta_d$.  Since $\beta_d = \delta_d < \delta_d +1$, these new steps cannot intersect $\Lambda_d'$.  No intersections among these $d-c$ rewired paths occur since the right turns that are each being executed one easterly step early are being coordinated along a staircase wherein $\lambda_m = \lambda_{q_h}$.  Given the choices of $c$ and $d$, for $i \in (c,d]$ we have $\beta_i \leq \xi_i = \xi_d = \delta_d$.  So $\beta_i < \delta_d +1$ for $i \in (c,d]$.  Hence $\Lambda_m^\prime$ does not intersect $\Lambda_d^\prime$ for $m \in [c, d)$.  When $m \notin [c,d]$ set $\Lambda_m^\prime := \Lambda_m$.

We finish by ruling out intersections between rewired paths and unmodified paths.  The rewiring of $\Lambda_c$ to produce $\Lambda_c'$ deformed that path toward the southwest.  Thus $\Lambda_c'$ cannot intersect $\Lambda_{c-1}$, which did not intersect $\Lambda_c$.  The argument above that none of the rewired $\Lambda_m'$ for $m \in [c,d)$ can intersect $\Lambda_d'$ also implies that the entire rewired $\Lambda_p'$ for $p \in [c,d]$ lie in the convex hull of the steps in the paths $\Lambda_c$ and $\Lambda_d'$.  This convex region lies weakly to the west of the longitude $\lambda_c+n-c$, and the path $\Lambda_c$ forms the northeastern boundary of it.  Therefore $\Lambda_{c-1}$ cannot intersect any of the rewired paths.  For $m \in [1,c-1)$ the path $\Lambda_m$ is strictly northeast of $\Lambda_{c-1}$ through its ending longitude of $\lambda_{c-1} + n - c + 1 > \lambda_c + n - c$, the ending longitude of $\Lambda_c$.  Hence none of these $\Lambda_m$ can attain such an intersection.  We showed above that $\Lambda_d'$ and $\Lambda_{d+1}$ do not intersect.  The path $\Lambda_d'$ forms the southwestern boundary of the convex region.  So none of the rewired paths can intersect $\Lambda_{d+1}$.  All of the unmodified $\Lambda_m$ for $m \in (d+1,n]$ sink on longitudes that are to the west of the sink longitude of $\Lambda_{d+1}$.  So none of the rewired paths can intersect any such paths $\Lambda_m$.  We have permuted the original sinks by rerouting $\Lambda_d$ so that $\Lambda_d'$ ends at the sink of $\Lambda_c$ and then ``shifting'' other paths so that for $m \in [c,d)$ the rewired $\Lambda_m'$ sinks at the sink of $\Lambda_{m+1}$.  For this permutation $\pi$ of the sinks we have shown $\mathcal{LD}_\lambda(\beta;\pi) \neq \emptyset$.  \end{proof}

\begin{proof}[of Lemma 6.2]Let $\beta \in U_\lambda(n) \backslash UGC_\lambda(n)$.  Again we produce a violating $n$-path $\Lambda^\prime$ by modifying one of the $n$-paths defined at the beginning of this section.  If $\beta \notin UBP_\lambda(n)$ apply Lemma \ref{lemma434.5};  otherwise $\beta \in UBP_\lambda(n)$.  Set $\delta := \Delta_\lambda(\beta)$; this $\lambda$-tuple is in $UI_\lambda(n)$.  By Fact \ref{gapless}(ii) we have $\delta \notin UG_\lambda(n)$.  The only critical entry in the last carrel $(q_r, n]$ is $n$.  So there cannot be a failure of $\lambda$-gapless based upon having $\delta_{q_r} > n$.  Let $h \in (1, r]$ and $d \in (q_{h-1}, q_h]$ be such that $\delta$ fails to be $\lambda$-gapless based upon having $\delta_{q_{h-1}} > \delta_d$, where $d$ is the leftmost critical index in the $h^{th}$ carrel $(q_{h-1},q_h]$.  Set $c := q_{h-1}$.  Since $\beta_c = \delta_c$ at the critical index $c$, we have $\beta_c \geq \delta_d +1$.  Here $\beta_c$ is again ``excessively'' deep, as in the preceding proof.  As in that proof, in each of two cases we will rewire some of the component paths in one of the $n$-paths constructed at the beginning of this section.  Since $d \leq q_r$, in each case we have $\lambda_d \geq 1$.  This implies $\lambda_d+n-d-1 \geq 0$.  Again the two cases will be $d = q_h$ and $d < q_h$.  In each of these two cases we will refer to the $n$-path $\Lambda$ constructed at the beginning of this section for the value of $d$ at hand.  The facts obtained above allow us to now rewire the path $\Lambda_d$ in each case to produce a path $\Lambda_d^\prime$ in essentially the same fashion as in the previous proof.  The only difference is that the rewired $\Lambda_d^\prime$ now has to make $\lambda_{q_{h-1}} - \lambda_{q_h}$ additional easterly steps just before it reaches its sink longitude of $\lambda_c+n-c = \lambda_{q_{h-1}}+n- q_{h-1}$.  With this in mind, construct $\Lambda_d'$ for each case as in the preceding proof.

First suppose $d = q_h$.  We can reuse the reasoning used in the `$d = q_h$' case to see that the southerly edge on the longitude $(\lambda_d+n-d)-1$ from depth $\delta_d$ to depth $\delta_d +1$ is not in use by $\Lambda_{d+1}$ here.  Otherwise $d < q_h$.  The reasoning used in the `$d < q_h$' case before to see that the early ``jog'' to the right by $\Lambda_d'$ is acceptable can be re-used here.  So $\Lambda_d'$ does not intersect $\Lambda_{d+1}$.  For either case for $d$, the index $d$ is the smallest critical index greater than $c$.

For either case for $d$, for $m = d-1, d-2, ... , c+1$, next successively rewire $\Lambda_{d-1}, \Lambda_{d-2}, ... , \Lambda_{c+1}$ to respectively produce paths $\Lambda_{d-1}^\prime, \Lambda_{d-2}^\prime, ... , \Lambda_{c+1}^\prime$ as in the preceding proof.  Then rewire the path $\Lambda_c$ to produce a path $\Lambda_c^\prime$ in nearly the same fashion as before.  The only difference is that the rewired $\Lambda_c^\prime$ now makes $\lambda_{q_{h-1}}-\lambda_{q_h}$ fewer easterly steps just before reaching its finishing longitude of $\lambda_{c+1} + n -c -1$.  The observation in the preceding proof concerning the coordination of the right turns among the shifted $d-c$ rewired paths will need a small modification to account for this.

Now set $\xi := \Xi_\lambda(\beta)$.  For each of the two cases for $d$ the fact that $d$ is the smallest critical index larger than $c$ implies $\xi_i = \xi_d = \beta_d = \delta_d$ for $i \in (c,d]$.  Since $\beta \in UBP_\lambda(n)$, we have $\beta_i \leq \xi_i = \xi_d = \delta_d < \delta_d+1$ for $i \in (c,d]$.  Hence $\Lambda_i'$ does not intersect $\Lambda_d'$ for $i \in [c,d)$.  The rest of this proof is the same as before.  \end{proof}

\section{Sufficient condition for nonpermutability}\label{suff}

To prove the converse of Proposition \ref{prop434.7} we will need the following lemma.  Here we rename and re-index the components of $\Lambda$ according to which fixed terminals they use as sinks.

\begin{lem}\label{lemma581.5}Let $\beta \in UBP_\lambda(n)$.  Let $\pi$ be a permutation of $[n]$ and let $\Lambda \in \mathcal{LD}_\lambda(\beta;\pi)$.  Set $\delta := \Delta_\lambda(\beta)$.  For each $i \in [n]$, the component $M_i$ of $\Lambda$ that sinks at the $i^{th}$ terminal $(\lambda_i+n-i,\beta_i)$ must reach $(\lambda_i + n - i, \delta_i)$.  So it must end with the ``stilt'' $(\lambda_i+n-i, \delta_i) \downarrow (\lambda_i+n-i, \beta_i)$.  \end{lem}

\begin{proof}Let $x$ be a positive critical index for $\beta$.  Let $x'$ be the largest critical index that is less than $x$.  Since critical intervals are contained in carrels, we have $\lambda_i = \lambda_x$ for $i \in (x', x]$.  Hence the sinks for these $M_i$ lie on consecutive longitudes.  For $i \in (x', x)$ we also have $\delta_{i} = \delta_{i+1}- 1$.  Hence the points $(\lambda_x+n-i, \delta_{i})$ for $i \in (x',x]$ form a staircase, since they also lie on consecutive latitudes.  The ``reaching'' claim is true for $M_x$ since $\delta_x = \beta_x$.  Let $i$ successively decrement from $x$ to $x' + 1$ and assume the claim is true for $i' \in (i,x]$.  The source of $M_i$ is weakly to the northwest of its sink $(\lambda_i+n-i, \beta_i)$.  Set $\xi := \Xi_\lambda(\beta)$.  From Facts \ref{critical}(iii)(i) recall that $\xi_i = \xi_x = \delta_x = \beta_x$. From Fact \ref{critical}(v) recall that $\delta_i \leq \beta_i \leq \xi_i$ for $\beta \in UBP_\lambda(n)$.  Then $\delta_i \leq \beta_i \leq \delta_x$ and the blockade formed by the ``in-use'' staircase points $(\lambda_x + n - i', \delta_{i'})$ for $i' \in (i,x]$ force the path $M_i$ to reach $(\lambda_x+n-i, \delta_i)$.  Then it must finish with $(\lambda_x + n - i, \delta_i) \downarrow (\lambda_x + n - i, \beta_i)$.  \end{proof}

Stanley remarked in Theorem 2.7.1 of \cite{St1} that $(\lambda, \beta)$ is nonpermutable when $\beta$ is a flag.  Since $UF_\lambda(n) \subseteq UGC_\lambda(n) \cap UBP_\lambda(n)$, the following proposition extends that remark.

\begin{prop}\label{prop581.6}Let $\beta \in U_\lambda(n)$.  If $\beta \in UGC_\lambda(n) \cap UBP_\lambda(n)$, then $(\lambda, \beta)$ is nonpermutable.  \end{prop}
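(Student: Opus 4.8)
The plan is to prove the contrapositive of what we need, i.e.\ to show that if $\pi$ is a nontrivial permutation then $\mathcal{LD}_\lambda(\beta;\pi) = \emptyset$, by arguing that any hypothetical $\Lambda \in \mathcal{LD}_\lambda(\beta;\pi)$ forces $\pi$ to be trivial. Since $\beta \in UBP_\lambda(n)$, Lemma \ref{lemma581.5} already pins down the finishing southerly stilt of every component: the component sinking at the terminal indexed by $\pi_m$ must pass through $(\lambda_{\pi_m}+n-\pi_m, \delta_{\pi_m})$ and then descend to $(\lambda_{\pi_m}+n-\pi_m, \beta_{\pi_m})$. So the question reduces entirely to the $\delta$-terminals $(\lambda_i+n-i, \delta_i)$ for $i \in [n]$: it suffices to show that no disjoint $n$-path can reach these $\delta$-terminals in a nontrivially permuted order. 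In other words, I would replace $\beta$ by its core $\delta = \Delta_\lambda(\beta)$ at the outset and prove the statement for the $R$-increasing tuple $\delta$, which is a gapless $\lambda$-tuple because $\beta \in UGC_\lambda(n)$.

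Next I would set up an induction on the sources from the ``outside in''. The sources are $(n-m,m)$ for $m \in [n]$, so source $1$ has the largest longitude $n-1$ and the smallest depth $1$, while source $n$ sits at longitude $0$ and depth $n$. The $\delta$-terminals have longitudes $\lambda_i+n-i$, which are strictly decreasing in $i$ (by the strictification), and depths $\delta_i$. The component starting at source $1$, at depth $1$, can only reach a terminal of depth $\geq 1$ that lies weakly to its east in longitude, i.e.\ with $\lambda_{\pi^{-1}(1)}+n-\pi^{-1}(1) \geq n-1$; since longitudes strictly decrease, this forces the terminal indexed by $1$ (or possibly an index $i$ with $\lambda_i - i = \lambda_1 - 1$, but the strictification was chosen precisely to separate longitudes, and $\lambda_i + n - i \le \lambda_1 + n - 2 < n-1$ unless $i=1$). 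So source $1$ must go to terminal $1$. Peeling that off, source $2$ at depth $2$ faces the remaining terminals; the same longitude argument shows it must go to terminal $2$ \emph{provided} $\delta_2 \geq 2$, which holds since $\delta$ is upper. Proceeding inductively, at stage $m$ the remaining available terminals are those indexed $m, m+1, \ldots, n$, and the component from source $m$, starting at depth $m$, must take the easternmost available terminal, which is terminal $m$ — but here is where the gapless hypothesis must enter, because a naive longitude/depth argument alone does not rule out ``skipping'' to a terminal further west whose longitude is still reachable.

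The main obstacle, then, is the interaction between longitude and depth across a cliff: when $\lambda$ has repeated parts, consecutive terminals $i, i+1$ in the same carrel have longitudes differing by exactly $1$, so the ``strictly decreasing longitude'' separation is only by one unit, and one must use the depths $\delta_i$ to break ties. This is exactly what being a gapless $\lambda$-tuple controls: within a carrel $\delta$ is $R$-increasing, so $\delta_{i+1} = \delta_i + 1$ or larger along each staircase, and at a carrel boundary the flag-critical-list condition guarantees that a descent $\delta_{q_h} > \delta_{q_h+1}$ is ``filled in'' by the leftmost staircase of the next carrel containing the value $\delta_{q_h}$. I would organize Case (i) around an $R$-increasing (equivalently $R$-flag-like) $\delta$ and argue directly with the staircase structure — replaying, for $\delta$, the standard Gessel--Viennot/Stanley argument that a flag forces the identity permutation — and reserve Case (ii) (as the proof's own phrasing ``Case (i)'' and ``Case (ii)'' hints) for transporting that conclusion back from $\delta$ to $\beta$ via Lemma \ref{lemma581.5}. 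The crux is checking that at each inductive step the depth $\delta_m$ of source $m$, combined with the gapless condition on the staircase containing index $m$, makes every terminal $i > m$ either too far west in longitude or blocked in depth by the already-routed components $1, \ldots, m-1$, so that terminal $m$ is the unique feasible sink; once that is established, $\pi = \mathrm{id}$ and we are done.
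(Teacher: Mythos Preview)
Your reduction from $\beta$ to $\delta$ via Lemma~\ref{lemma581.5} is exactly how the paper begins, so that part is fine. But the ``outside in'' induction collapses at the very first step. You claim that source $1$ at $(n-1,1)$ can only reach terminal $1$ because the other terminals have longitude $\lambda_i + n - i \leq \lambda_1 + n - 2 < n-1$; that last inequality says $\lambda_1 < 1$, which holds only for the empty partition. In general, source $1$ can reach terminal $j$ whenever $\lambda_j \geq j-1$, and there is nothing forcing $\pi_1 = 1$ from longitude and depth constraints alone. The same problem recurs at every inductive stage: the ``peel off the easternmost'' idea does not survive the fact that many terminals lie east of each source. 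You acknowledge this yourself when you say the gapless hypothesis ``must enter'', but you never actually enter it---the paragraph about staircases and ``replaying the standard flag argument for $\delta$'' is not a proof, especially since $\delta$ is \emph{not} a flag (it can drop at carrel boundaries, which is precisely where the work lies). Your reading of the paper's Case (i)/(ii) split is also off: both cases concern $\delta$, not a passage between $\delta$ and $\beta$.

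The paper avoids the induction entirely. Instead of tracking all $n$ paths, it takes a nontrivial $\pi$, finds a descent in $\pi^{-1}$---indices $i<k$ with $\pi_i = \pi_k + 1 =: m+1$---and studies only $\Lambda_i$ (sinking at terminal $m+1$) and $\Lambda_k$ (sinking at terminal $m$). Because $\Lambda_k$ starts strictly southwest of $\Lambda_i$ yet must finish strictly east of it, $\Lambda_k$ visits every longitude that $\Lambda_i$ does, in particular the longitude $v$ on which $\Lambda_i$ terminates. Case~(i): if $\Lambda_k$ reaches longitude $v$ at depth $z \leq \delta_{m+1}$, a direct topological/monotonicity argument forces $\Lambda_i$ and $\Lambda_k$ to meet. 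Case~(ii): if $z > \delta_{m+1}$, then $\delta_m \geq z > \delta_{m+1}$, so $m$ is a carrel boundary $q_h$; now the gapless condition supplies a staircase of $s = \delta_m - \delta_{m+1} + 1$ terminals $(v,\delta_{m+1}), (v-1,\delta_{m+1}+1),\dots,(v-s+1,\delta_m)$ that are sinks for \emph{other} components, and $\Lambda_k$, starting weakly northwest of this staircase and needing to reach $(v,z)$ strictly southeast of it, must cross one of them. This two-path descent argument is where the proof actually happens, and it is what your sketch is missing.
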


\begin{proof}Let $\beta \in UGC_\lambda(n) \cap UBP_\lambda(n)$.  Let $\pi$ be a permutation of $[n]$ that is not the identity.  There must exist at least one descent in $\pi^{-1}$.  So there exist $1 \leq i < k \leq n$ such that $\pi_i = \pi_k+1$.  Set $m := \pi_k$.  Set $\delta := \Delta_\lambda(\beta)$.  Let $\Lambda$ be an $n$-path from the standard sources $(n-m,m)$ for $m \in [n]$ to the terminals respectively listed in $\pi.(\lambda,\beta)$.  For the sake of contradicting ``nonpermutable'' suppose $\Lambda \in \mathcal{LD}_\lambda(\beta;\pi)$.  By the lemma, without loss of generality we may simplify $\Lambda$ by replacing the sequence $\beta$ of depths of its terminals with the sequence of weakly shallower depths $\delta$.  This shortens its original paths by deleting their final stilts (which cannot intersect).

As an index $p$ runs from 1 to $n$, the longitudes $\lambda_p + n - p$ of the $n$ fixed terminals (which will serve as sinks for the permuted paths) move strictly from east to west.  Visualize the paths in $\Lambda$ as being successively launched in time as their sources are scanned from northeast to southwest.  We consider the components $\Lambda_i$ and $\Lambda_k$ of $\Lambda$.  The earlier $\Lambda_i$ launches at $(n-i,i)$ and sinks at $(\lambda_{m+1}+n-m-1, \delta_{m+1})$.  The later $\Lambda_k$ launches at $(n-k,k)$ to the southwest and sinks at $(\lambda_m+n-m, \delta_m)$.  Comparing the starting and finishing longitudes for $\Lambda_k$ to those for $\Lambda_i$, we have $n-k < n-i$ and $\lambda_m + n - m > \lambda_{m+1} +n-m-1$.  So every longitude that is visited by the earlier $\Lambda_i$ is later visited by the longer $\Lambda_k$.  The earlier $\Lambda_i$ finishes at the depth $\delta_{m+1}$ on the longitude at $\lambda_{m+1}+n-m-1 =: v$.  Let's say that the later path $\Lambda_k$ first reaches the longitude at $v$ on the latitude at $z$, for some $z \geq 1$.

In each of the following two cases we prove that a (contradicting) intersection exists.
\\ (i)  First suppose that $z \leq \delta_{m+1}$.  The later $\Lambda_k$ reaches the longitude at $n-i$ weakly to the south of the earlier $\Lambda_i$ and it reaches the longitude at $\lambda_{m+1}+n-m-1$ weakly to the north of $\Lambda_i$.  So $\Lambda_k$ must intersect the continuous $\Lambda_i$.

\begin{figure}[h!]
  \begin{center}
    \includegraphics[scale=1]{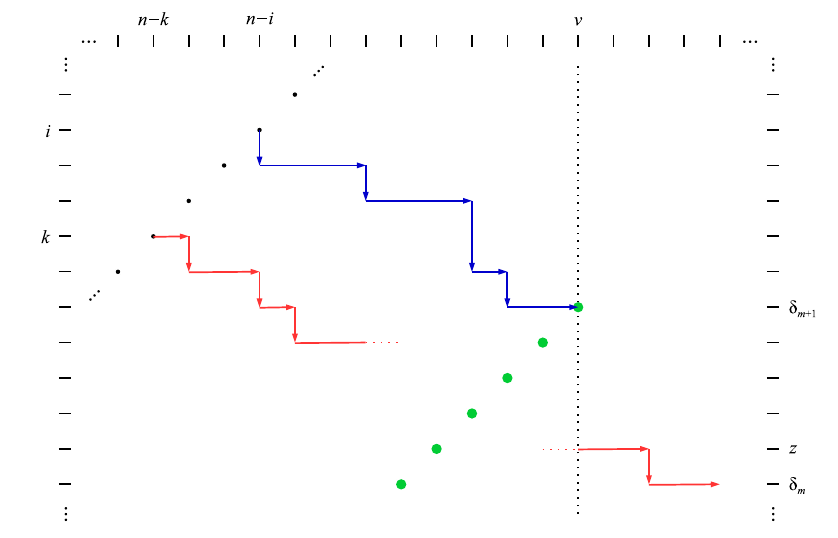}
    \caption*{Figure 7.1.  The earlier $\Lambda_i$ and the later $\Lambda_k$ respectively sink at the terminals $(v = \lambda_{m+1}+n-m-1, \delta_{m+1})$ and $(\lambda_m+n-m, \delta_m)$.}
  \end{center}
\end{figure}

\noindent (ii)  Otherwise we have $z > \delta_{m+1}$.  Since $z$ cannot exceed the finishing depth $\delta_m$ for $\Lambda_k$, we have $z \leq \delta_m$.  Hence $\delta_m > \delta_{m+1}$.  See Figure 7.1.  By Fact \ref{gapless}(ii) we know that $\delta$ is $\lambda$-gapless; in particular it is $\lambda$-increasing.  This forces $m = q_h$ for some $h \in [r]$.  Set $s := \delta_m - \delta_{m+1}+1$.  Since $\delta$ is $\lambda$-gapless, by Fact \ref{gapless}(i) we have $s \leq q_{h+1} - q_h$ and $\delta_{m+1} = \delta_m - s +1$, $\delta_{m+2} = \delta_m - s + 2, ... , \delta_{m+s} = \delta_m$.  For $p \in [s]$ we have $\lambda_{q_h+p} = \lambda_{q_{h+1}}$.  Starting at the sink $(v, \delta_{m+1})$ of $\Lambda_i$ and moving to the southwest with stairsteps, we note that the $s$ points $(v, \delta_{m+1}), (v-1, \delta_{m+1}+1), ... , (v-s+1, \delta_m)$ forming a staircase are terminals that are serving as sinks for some paths other than $\Lambda_k$.  These terminals are shown by the big dots in Figure 7.1.  The source of the later $\Lambda_k$ is strictly to the southwest of the source of the earlier $\Lambda_i$ by $s$ staircase steps.  Since $\Lambda_k$ does not intersect $\Lambda_i$, it must remain strictly to the southwest of $\Lambda_i$ as it approaches $(v,z)$.  The earlier $\Lambda_i$ sinks at $(v, \delta_{m+1})$, which is the northeasternmost point on the staircase of $s$ terminals.  So the source of the later $\Lambda_k$ must be weakly to the northwest of the staircase of terminals.  This path $\Lambda_k$ must remain weakly to the north of the latitude at $z$ as it approaches $(v,z)$.  Since $\Lambda_k$ is confined to this integral-convex region with three boundaries, to sink at $(\lambda_m + n - m, \delta_m)$ it must reach a point on the staircase of terminals.  Hence $\Lambda_k$ must intersect some component path of $\Lambda$.  \end{proof}

\section{Accidental equalities; gapless core Schur polynomials are nice}\label{nice}

Unless otherwise restricted, below $\beta$ and $\beta'$ refer to arbitrary upper $R_\lambda$-tuples.  Define $gv_\lambda(\beta;x)$ to be the G-V determinant $| h_{\lambda_j-j+i}(i,\beta_j;x) |$ introduced in Proposition \ref{prop315.6}.

To motivate our supplemental remarks and results, we begin with a sequence of review and preview statements.  The necessary direction of Theorem \ref{theorem777.1} said that $\beta$ must be in $UGC_\lambda(n) \cap UBP_\lambda(n)$ to qualify for the application of the G-V method for deducing $s_\lambda(\beta;x) = gv_\lambda(\beta;x)$ in Corollary \ref{cor777.2}.  But it is conceivable that one could ``accidentally'' have $s_\lambda(\beta;x) = gv_\lambda(\beta;x)$ when $\beta \notin UGC_\lambda(n)$ even though such a $\beta$ would produce $n$-path terminal pairs for $\lambda$ that are not nonpermutable.  Next, Corollary \ref{cor777.2.5} noted that when $\beta \in UGC_\lambda(n) \backslash UBP_\lambda(n)$ one could nonetheless use the G-V method to compute $s_\lambda(\beta;x)$ by first ``pre-processing'' $\beta$ to produce an equivalent $\beta' \in UGC_\lambda(n) \cap UBP_\lambda(n)$.  Then one would have $s_\lambda(\beta;x) = gv_\lambda(\beta';x)$.  Lastly, when $R_\lambda \subset [n-1]$, to produce a given polynomial with a row bound sum there will be some freedom in the choice of $\beta$.  This goes back to the tableaux set level, since for non-strict $\lambda$ for a given $\beta \in U_\lambda(n)$ there will exist many $\beta' \in U_\lambda(n)$ such that $\mathcal{S}_\lambda(\beta') = \mathcal{S}_\lambda(\beta)$.  And then $\mathcal{S}_\lambda(\beta) = \mathcal{S}_\lambda(\beta')$ trivially implies $s_\lambda(\beta;x) = s_\lambda(\beta';x)$.  But it is conceivable that one could accidentally have $s_\lambda(\beta';x) = s_\lambda(\beta;x)$ while $\mathcal{S}_\lambda(\beta') \neq \mathcal{S}_\lambda(\beta)$.  Here and in Section \ref{effic} we investigate what is possible and what is not possible via the choice of alternate $\beta'$.

Corollary 10.4(i) of \cite{PW3} related the equalities of the forms $\mathcal{S}_\lambda(\beta) = \mathcal{S}_\lambda(\beta')$ and $s_\lambda(\beta;x) = s_\lambda(\beta';x)$:

\begin{fact}\label{setequality}If $\eta \in UGC_\lambda(n)$, then $s_\lambda(\eta;x) = s_\lambda(\beta;x)$ for some $\beta \in U_\lambda(n)$ implies $\mathcal{S}_\lambda(\eta) = \mathcal{S}_\lambda(\beta)$ and $\beta \in UGC_\lambda(n)$.  \end{fact}

\noindent The proof of this relied upon the connection between ``gapless core Schur polynomials'' (defined below) and Demazure characters studied in that paper.  Hence if $\beta \notin UGC_\lambda(n)$, then there does not exist $\eta \in UGC_\lambda(n)$ such that accidentally $s_\lambda(\beta;x) = s_\lambda(\eta;x)$.  But for $\beta, \beta' \in U_\lambda(n) \backslash UGC_\lambda(n)$ one could at this point in time conceivably have $s_\lambda(\beta;x) = s_\lambda(\beta';x)$ when $\mathcal{S}_\lambda(\beta) \neq \mathcal{S}_\lambda(\beta')$, as noted in Problem 10.5 of \cite{PW3}.

Next we relate equalities of the form $s_\lambda(\beta;x) = s_\lambda(\beta';x)$ and $s_\lambda(\beta;x) = gv_\lambda(\beta';x)$.  Obviously $s_\lambda(\beta;x) = s_\lambda(\beta';x)$ and $s_\lambda(\beta';x) = gv_\lambda(\beta';x)$ imply $s_\lambda(\beta;x) = gv_\lambda(\beta';x)$.  This reasoning is used in the next section to confirm Corollary \ref{cor777.2.5} when $\beta \in UGC_\lambda(n)$.  More generally, when can we obtain $s_\lambda(\beta;x) = gv_\lambda(\beta';x)$?  This conclusion is the most interesting when $\beta' \in UGC_\lambda(n) \cap UBP_\lambda(n)$, since $UGC_\lambda(n) \cap UBP_\lambda(n)$ contains all of the upper $\lambda$-tuples for which the G-V method is valid.  Corollary \ref{fctinterval} below generalizes Corollary \ref{cor777.2.5}.  It is impossible to have $s_\lambda(\beta;x) = gv_\lambda(\beta';x)$ for $\beta' \in UGC_\lambda(n) \cap UBP_\lambda(n)$ when $\beta \notin UGC_\lambda(n)$:  Then $gv_\lambda(\beta';x) = s_\lambda(\beta';x)$, but $s_\lambda(\beta;x) = s_\lambda(\beta';x)$ is not possible here.  So in the restricted context of $\beta' \in UGC_\lambda(n) \cap UBP_\lambda(n)$ we have a complete answer to the question above with the statements:  when $\beta \notin UGC_\lambda(n)$ having $s_\lambda(\beta;x) = gv_\lambda(\beta';x)$ is impossible, when $\beta \in UGC_\lambda(n)$ then Corollary \ref{fctinterval} below describes the $\beta'$ for which $s_\lambda(\beta;x) = gv_\lambda(\beta';x)$, and when $\beta \in UGC_\lambda(n) \cap UBP_\lambda(n)$ the stronger result $s_\lambda(\beta;x) = gv_\lambda(\beta;x)$ is Corollary \ref{cor777.2}.  For those who are willing to accept ``accidental'' (irrespective of nonpermutability) equalities, then the following questions of necessity for being equal to the G-V determinant are open:

\begin{prob}\label{open}Let $\beta \in U_\lambda(n)$.

\noindent (i)  Suppose $\beta \notin UGC_\lambda(n) \cap UBP_\lambda(n)$.  Is it possible to have $s_\lambda(\beta;x) = gv_\lambda(\beta;x)$?

\noindent (ii)  Suppose $\beta \notin UGC_\lambda(n)$.  Is it possible to have $s_\lambda(\beta;x) = gv_\lambda(\beta';x)$ for some $\beta' \in U_\lambda(n)$?  \end{prob}

We add to one of the themes of \cite{PW3}.  There we showed that the row bound sums $s_\lambda(\eta;x)$ for $\eta \in UGC_\lambda(n)$ behaved more nicely than the row bound sums $s_\lambda(\beta;x)$ for $\beta \in U_\lambda(n) \backslash UGC_\lambda(n)$.  Hence we awarded the name \emph{gapless core Schur polynomials} to the former polynomials, for which the following can be said:  When $\eta \in UGC_\lambda(n)$, the polynomial $s_\lambda(\eta;x)$ arises as a previously-studied flag Schur polynomial and as a Demazure polynomial, as shown in Proposition 8.1 and Theorem 9.1(i) of \cite{PW3}.  (In fact, its tableau set coincides with the relevant set of Demazure tableaux.)  When $\eta' \in UGC_\lambda(n)$ as well, then $s_\lambda(\eta;x)$ and $s_\lambda(\eta';x)$ cannot be equal unless $\mathcal{S}_\lambda(\eta) = \mathcal{S}_\lambda(\eta')$.  The polynomial $s_\lambda(\eta;x)$ can be computed with a G-V determinant (though this may require pre-processing).  The distinct polynomials arising as gapless core Schur polynomials $s_\lambda(\eta;x)$ are counted by the parabolic Catalan numbers, as shown in Theorem 13.1(iv) of \cite{PW3}.  In contrast, the row bound sums $s_\lambda(\beta;x)$ for $\beta \in U_\lambda(n) \backslash UGC_\lambda(n)$ seem to be of dubious value.  They cannot arise as flag Schur polynomials or as Demazure polynomials, as shown in Corollary 10.4(i) and Theorem 10.3 of \cite{PW3}.  It is not known if one can have $s_\lambda(\beta;x) = s_\lambda(\beta';x)$ with $\mathcal{S}_\lambda(\beta) \neq \mathcal{S}_\lambda(\beta')$ for $\beta' \in U_\lambda(n) \backslash UGC_\lambda(n)$, nor is it known if $s_\lambda(\beta;x)$ can somehow be computed with an $n \times n$ determinant.

\section{Equivalence and efficiency}\label{effic}

When the partition $\lambda$ is not strict, more than one upper $\lambda$-tuple $\beta$ can specify the same tableau set $\mathcal{S}_\lambda(\beta)$ or the same row bound sum $s_\lambda(\beta;x)$.  And when $\beta$ is more specifically a gapless core $\lambda$-tuple, more than one bounded gapless core $\lambda$-tuple $\beta'$ can be used to express $s_\lambda(\beta;x)$ with $gv_\lambda(\beta';x)$.  We introduce equivalence relations to describe how much freedom one has when choosing alternate upper $\lambda$-tuples $\beta'$ for the computation of $s_\lambda(\beta;x)$.  Within each class we describe the upper $\lambda$-tuples that are valid for the application of the G-V method, and then we identify the upper $\lambda$-tuple that is the most efficient for evaluating the G-V determinant.

In Section 8 of \cite{PW3} we introduced an equivalence relation on $U_\lambda(n)$ by defining $\beta \approx_\lambda \beta'$ when $\mathcal{S}_\lambda(\beta) = \mathcal{S}_\lambda(\beta')$.  There Proposition 8.2 stated that this equivalence relation was the same as the equivalence relation $\sim_R$ defined on $U_R(n)$ in that Section 5 when $R := R_\lambda$.  Lemma 5.1 of \cite{PW3} said that this earlier relation $\sim_R$ could be defined entirely in terms of upper $\lambda$-tuples using the map $\Delta_{R_\lambda} =: \Delta_\lambda$.  The following background facts can be confirmed using Proposition 5.2(ii)(iii), Proposition 4.3(ii), and Lemma 5.1(i) of \cite{PW3}, given the non-essential definitions of ``$\lambda$-canopy'' tuple and ``$\lambda$-floor'' flag in Parts (iv) and (v) of that Definition 3.1:

\begin{fact}\label{OldIntervals}  The equivalence classes for the restrictions of $\approx_\lambda$ and $\sim_R$ to $UGC_\lambda(n)$ and to $UF_\lambda(n)$ are:

\noindent(i) In $UGC_\lambda(n)$ these subsets are the intervals in $U_\lambda(n)$ of the form $[ \gamma, \kappa ]$, where $\gamma$ is a gapless $\lambda$-tuple and $\kappa$ is the unique ``$\lambda$-canopy'' tuple such that $\gamma = \Delta_\lambda(\kappa)$.

\noindent(ii) In $UF_\lambda(n)$ these subsets are the intervals in $UF_\lambda(n)$ of the form $[ \tau, \xi ]$, where $\tau$ is a ``$\lambda$-floor'' flag and $\xi := \Xi_\lambda(\tau)$.  \end{fact}

The equivalence classes in our most favored set $UGC_\lambda(n) \cap UBP_\lambda(n)$ of upper $\lambda$-tuples are described by pairing the minimum elements of Part (i) with maximum elements of the same ``$\lambda$-ceiling'' kind as in Part (ii):

\begin{prop}\label{newinterval}  The equivalence classes for the restrictions of $\approx_\lambda$ and $\sim_R$ to $UGC_\lambda(n) \cap UBP_\lambda(n)$ are the intervals in $U_\lambda(n)$ of the form $[\gamma, \xi]$, where $\gamma$ is a gapless $\lambda$-tuple and $\xi$ is the upper $\lambda$-flag $\Xi_\lambda(\gamma)$.  \end{prop}

\begin{proof} The restricted classes are the intersections of the classes in $UGC_\lambda(n)$ with $UBP_\lambda(n)$.  By Fact \ref{OldIntervals}(i), these restricted classes have the form $[\gamma, \kappa] \cap UBP_\lambda(n)$.  To consider one of these, fix $\gamma \in UG_\lambda(n)$ and let $\kappa$ be the unique $\lambda$-canopy tuple in $U_\lambda(n)$ such that $\Delta_\lambda(\kappa) = \gamma$.  Set $\xi := \Xi_\lambda(\gamma)$.  We have $\xi \in UF_\lambda(n)$ by Fact \ref{critical}(iii).  Lemma 5.1 of \cite{PW3} says that two upper $\lambda$-tuples $\beta$ and $\beta'$ are equivalent if and only if $\Delta_\lambda(\beta) = \Delta_\lambda(\beta')$.  So $\beta \in [\gamma, \kappa]$ for $\beta \in U_\lambda(n)$ if and only if $\Delta_\lambda(\beta) = \Delta_\lambda(\gamma)$.  Our simultaneous definitions for the maps $\Delta_\lambda$ and $\Xi_\lambda$ imply that $\Xi_\lambda(\beta) = \Xi_\lambda(\beta')$ for two upper $\lambda$-tuples $\beta$ and $\beta'$ if and only if $\Delta_\lambda(\beta) = \Delta_\lambda(\beta')$.  The class of $\gamma$ in $U_\lambda(n)$ is $[\gamma, \kappa]$.  Hence $\beta \in [\gamma, \kappa]$ for $\beta \in U_\lambda(n)$ if and only if $\Xi_\lambda(\beta) = \Xi_\lambda(\gamma) =: \xi$.  The definition of $UBP_\lambda(n)$ says $\beta \in UBP_\lambda(n)$ for $\beta \in [\gamma, \kappa]$ if and only if $\beta \leq \xi$.  Since $\Xi_\lambda(\xi) = \xi$ by Fact \ref{critical}(vi), we can deduce $\xi \in [\gamma, \kappa]$ and thus $\xi \leq \kappa$.  Therefore $\beta \in [\gamma, \xi]$ for $\beta \in U_\lambda(n)$ if and only if $\beta \in [\gamma, \kappa] \cap UBP_\lambda(n)$.  Clearly $[\gamma, \xi] \subseteq UGC_\lambda(n) \cap UBP_\lambda(n)$.  \end{proof}

\noindent The equivalence classes in Proposition \ref{newinterval} are induced on $UGC_\lambda(n) \cap UBP_\lambda(n)$ by any one of the following equalities:  $\mathcal{S}_\lambda(\beta) = \mathcal{S}_\lambda(\beta')$, $s_\lambda(\beta;x) = s_\lambda(\beta';x)$, $s_\lambda(\beta) = gv_\lambda(\beta')$, or $gv_\lambda(\beta) = gv_\lambda(\beta')$.  Since it was noted in Section \ref{advdefns} that $UF_\lambda(n) \subseteq UGC_\lambda(n)$ and $UF_\lambda(n) \subseteq UBP_\lambda(n)$, if one is interested only in flags there is no need to consider how the classes for $\approx_\lambda$ restrict to $UF_\lambda(n) \cap ( UGC_\lambda(n) \cap UBP_\lambda(n) ) = UF_\lambda(n)$.

\begin{cor}\label{fctinterval}Let $\eta \in UGC_\lambda(n)$ and $\eta' \in UGC_\lambda(n) \cap UBP_\lambda(n)$.  Then $s_\lambda(\eta;x) = gv_\lambda(\eta';x)$ if and only if $\eta' \in [\Delta_\lambda(\eta), \Xi_\lambda(\eta)]$.  Here $\Delta_\lambda(\eta)$ is a gapless $\lambda$-tuple and $\Xi_\lambda(\eta)$ is an upper $\lambda$-flag.  This nonempty interval is contained in $UGC_\lambda(n) \cap UBP_\lambda(n)$. \end{cor}

\begin{proof}Let $\eta \in UGC_\lambda(n)$.  Set $\gamma := \Delta_\lambda(\eta)$ and $\xi := \Xi_\lambda(\gamma) \in UF_\lambda(n)$.  Since $\eta \in UGC_\lambda(n)$, by Fact \ref{gapless}(ii) we have $\gamma \in UG_\lambda(n)$ and by Fact \ref{critical}(iv) we have $\gamma \leq \xi$.  So $[\gamma, \xi]$ is of the class form $[\gamma', \xi']$ in the proposition, and hence it is contained in $UGC_\lambda(n) \cap UBP_\lambda(n)$ as well as being nonempty.  Since $\Delta_\lambda(\gamma) = \gamma := \Delta_\lambda(\eta)$ by Fact \ref{critical}(vi), by Lemma 5.1 of \cite{PW3} we can deduce that $\eta \approx_\lambda \gamma$.  Let $\eta'$ be in the class $[\gamma, \xi]$ for $\approx_\lambda$ of $\gamma$ in $UGC_\lambda(n) \cap UBP_\lambda(n)$.  Then $\mathcal{S}_\lambda(\eta) = \mathcal{S}_\lambda(\eta')$ and so $s_\lambda(\eta;x) = s_\lambda(\eta';x)$.  Corollary \ref{cor777.2} says that $s_\lambda(\eta';x) = gv_\lambda(\eta';x)$.  The special case of $s_\lambda(\eta;x) = gv_\lambda(\eta';x)$ for $\eta' := \gamma$ was stated as Corollary \ref{cor777.2.5}.  Conversely, let $\eta' \in UGC_\lambda(n) \cap UBP_\lambda(n)$ be such that $s_\lambda(\eta;x) = gv_\lambda(\eta';x)$.  Then $s_\lambda(\eta';x) = gv_\lambda(\eta';x)$ implies $s_\lambda(\eta;x) = s_\lambda(\eta';x)$, and so Fact \ref{setequality} implies $\mathcal{S}_\lambda(\eta) = \mathcal{S}_\lambda(\eta')$.  Hence $\eta \approx_\lambda \eta'$.  Since $\eta \approx_\lambda \gamma$ and $\eta' \in UGC_\lambda(n) \cap UBP_\lambda(n)$, we see that $\eta'$ must be in the class $[\gamma, \xi]$ for $\approx_\lambda$ of $\gamma \in UGC_\lambda(n) \cap UBP_\lambda(n)$.  \end{proof}

Applying the $\lambda$-platform map $\Xi_\lambda$ to a gapless core $\lambda$-tuple $\eta$ produces an upper $\lambda$-flag $\varphi$ that is equivalent to $\eta$.  So any row bound set $\mathcal{S}_\lambda(\eta)$ for a gapless core $\lambda$-tuple also arises as a row bound set $\mathcal{S}_\lambda(\varphi)$ for an upper $\lambda$-flag.  This confirms the remark made at the end of Section \ref{nice} that every gapless core Schur polynomial has already arisen as a flag Schur polynomial.  If someone was to insist that their input to a G-V determinant must be an upper $\lambda$-flag, then at least the maximum element $\Xi_\lambda(\eta)$ of the interval of Corollary \ref{fctinterval} would be available.  However, from the viewpoint of efficient determinant evaluation, the proof of our next result should indicate that that upper $\lambda$-flag would be the worst choice from that interval of choices.

Let $\eta \in UGC_\lambda(n)$.  We say $\eta' \in UGC_\lambda(n) \cap UBP_\lambda(n)$ attains \emph{maximum efficiency} if $gv_\lambda(\eta';x) := |h_{\lambda_j-j+i}(i,\eta'_j;x)|$ has fewer total monomials among its entries than does the G-V determinant $gv_\lambda(\eta'';x)$ for any other $\eta'' \in UGC_\lambda(n) \cap UBP_\lambda(n)$ that produces $s_\lambda(\eta;x)$.

\begin{prop}\label{prop826.9}  Let $\eta \in UGC_\lambda(n)$.  The gapless $\lambda$-tuple $\Delta_\lambda(\eta)$ attains maximum efficiency among the choices in the interval $[\Delta_\lambda(\eta), \Xi_\lambda(\eta)]$ allowed by Corollary \ref{fctinterval}. \end{prop}

\begin{proof}Set $\gamma := \Delta_\lambda(\eta)$.  Let $\eta' \in (\gamma, \Xi_\lambda(\eta)]$.  For $i, j \in [n]$ the $(i,j)$-entry of the G-V determinant $gv_\lambda(\eta';x)$ has $\binom{\lambda_j-j+\eta'_j}{\lambda_j-j+i}$ monomials.  Since $\gamma < \eta'$ there exists some $m \in [n]$ such that $\lambda_m - m + \eta'_m > \lambda_m - m + \gamma_m$ and $\lambda_j - j + \eta'_j \geq \lambda_j - j + \gamma_j$ for $m \neq j \in [n]$. \end{proof}

The description of lattice paths given in the proof of Lemma \ref{lemma581.5} can be used to visualize the choices in one of the equivalence classes $[\gamma, \xi]$ of Proposition \ref{newinterval}:  These choices vary only in the lengths of their path-ending stilts.  We have not been able to convert the G-V determinant $gv_\lambda(\eta;x)$ for $\eta \in [\gamma, \xi]$ to the G-V determinant $gv_\lambda(\gamma;x)$ with naive row and column operations.  If $\lambda_n > 0$, one can factor out $(x_1x_2 \cdots x_n)^{\lambda_n}$ from $gv_\lambda(\gamma;x)$ and work with $\lambda^\prime := (\lambda_1 - \lambda_n, \lambda_2 - \lambda_n, ... , 0)$.  Going further, when there are only $p < n$ nonempty rows in the shape $\lambda$, the determinant $gv_{\lambda'}(\gamma;x)$ is equal to its upper left $p \times p$ minor.

Various combinatorial counts have been expressed with determinants of binomial coefficients.  If two families of such determinants look similar and test evaluations yield the same integers, then one first tries to relate them with row and column operations.  If that does not work, the following consequence of Corollary \ref{cor777.2.5} offers an alternative:

\begin{cor}\label{EqualDeterms}Let $\lambda$ be a partition and let $\beta, \beta'$ be gapless core $\lambda$-tuples.  If $\Delta_\lambda(\beta) = \Delta_\lambda(\beta')$ then $$\left|{ \lambda_j - j + \beta_j \choose \beta_j - i} \right| = \left|{ \lambda_j - j + \beta'_j \choose \beta'_j - i} \right|$$    \end{cor}
\noindent The hypothesis concerning the cores of $\beta$ and $\beta'$ is equivalent to $\beta \approx_\lambda \beta'$.

To illustrate efficiency for determinant entries we first adapt the G-V proof of the Jacobi-Trudi identity for infinite variables in Theorem 7.16.1 of \cite{St2} to obtain the classic form of this identity in a finite number $n$ of variables for a nonskew shape, as in Equation 2.8 of \cite{Oka}.  Working with a finite number of variables will allow us to use semistandard tableaux directly, rather than refering to the equivalence with reverse semistandard tableaux as in \cite{St2}.  Avoiding that equivalence can be accomplished with the following relabeling, which will also harmonize that proof with the vertical labeling convention in this paper:  Now label the translates of the horizontal axis in \cite{St2} from the north with $1, 2, ... , n$.  (There in Figure 7-6 from the top these are relabeled 1, 2, 3, 4.)  Reading from the east, in general the sources for the $n$ component finite paths in this adaptation become $(n-1,1), (n-2,1), ... , (0,1)$.  Since this application of Theorem 2.7.1 of \cite{St1} uses the row bounds $(n, n, ... , n)$, the $n$ terminals in the proof of Theorem 7.16.1 are $(\lambda_1 + n - 1, n), (\lambda_2 + n - 2, n), ... , (\lambda_n, n)$.  Because all paths start at depth 1 and end at depth $n$, each of the determinant entries in that theorem are complete homogeneous symmetric functions.  For $1 \leq i, j \leq n$ the $(i,j)$-entry has $\binom{\lambda_j+n+i-j-1}{n-1}$ monomials.  Now we compare the derivation of our Corollary \ref{cor777.2} in the same lattice path setting:  Choose row bounds $\beta \in UGC_\lambda(n) \cap UBP_\lambda(n)$.  Recall that our sources are $(n-1,1), (n-2,2), ... , (0,n)$ and our terminals are $(\lambda_1+n-1, \beta_1), (\lambda_2+n-2, \beta_2), ... , (\lambda_n, \beta_n)$.  Set $\delta := \Delta_\lambda(\beta)$.  When $\lambda$ is not strict the general inequality $\delta \leq \beta$ often becomes strict.  For $1 \leq i, j \leq n$ the $(i,j)$-entry of our determinant in Corollary \ref{cor777.2.5} has only $\binom{\lambda_j + \delta_j - j}{\delta_j - i}$ monomials.  Proposition \ref{prop826.9} stated that these are the most efficient entries possible in this context.  These efficiencies have been obtained by first deleting the ``initial stilts'' for the $m^{th}$ component paths $\Lambda_m$ for $m \in [n]$ in the adaptation of \cite{St2} above, starting instead at our sources.  Those stilts dropped from $(n-m,1)$ to $(n-m,m)$.  For a general gapless core Schur polynomial one usually chooses $\beta < (n, n,  ... , n)$.  Changing the row bounds from $(n, n, ... , n)$ to $\beta$ deletes portions of some ``ending stilts''.  Using Lemma \ref{lemma581.5} it can be seen that one may as well terminate the paths at the terminals $(\lambda_1 + n - 1, \delta_1), (\lambda_2 + n - 2, \delta_2), ... , (\lambda_n, \delta_n)$ in the G-V derivation of Corollary \ref{cor777.2}.  Then also changing the row bounds from $\beta$ to $\delta$ deletes the rest of the unnecessary ``ending stilts'', further shortening the paths by $\beta_m - \delta_m$ for $m \in [n]$.  These ending stilts dropped from $(\lambda_m+n-m, \delta_m)$ to $(\lambda_m+n-m, \beta_m)$, and then from $(\lambda_m+n-m, \beta_m)$ to $(\lambda_m+n-m, n)$.  For a simple numerical comparison consider the following extreme example:  Let $p \geq 1$ and set $n := 2p+1$.  Let $q \geq 1$.  Consider the $n$-tuples $\lambda := (q, q, ... , q)$ and $\beta := (2p+1, 2p+1, ... , 2p+1)$.  So $\delta = (1, 2, 3, ... , 2p, 2p+1)$.  Here the $(i,j)$-entry of the Jacobi-Trudi determinant has $\binom{q+2p+i-j}{2p}$ monomials.  In contrast our $(i,j)$-entry has $\binom{q}{j-i}$ monomials.  When $i = j$ this comparison becomes $\binom{q+2p}{2p} > \binom{q}{0} = 1$.

In the first paper \cite{PW2} of this series we defined the parabolic Catalan number $C_n^\lambda$ to be the number of ``$\lambda$-312-avoiding permutations''.  There in Theorem 9.1(iii) we noted that this is also the number of gapless $\lambda$-tuples.  Given this, the following result is a consequence of Propositions \ref{newinterval} and \ref{prop826.9}.  It was previewed as Part (vi) of Theorem 13.1 of \cite{PW3}:

\begin{cor}\label{cor826.12}The number of valid upper $\lambda$-tuple inputs to the G-V determinant expression for flag Schur polynomials on the shape $\lambda$ that attain maximum efficiency is $C_n^\lambda$.  \end{cor}

\noindent For a sequence of examples, let $m \geq 1$ and set $n := 2m$.  Suppose $\lambda$ is a partition whose shape's set of column lengths is $R_\lambda = \{ 2, 4, ... , 2m-2 \}$.  Then the number of maximum efficiency inputs here is given by the member of Sequence A220097 of the OEIS \cite{Slo} that is indexed by $m$.

\section{Determinant expression for some Demazure characters}\label{Dem}

At the end of Section 10 of \cite{PW3} we promised to give a determinant expression for certain $GL(n)$ Demazure characters (key polynomials) here.  As noted in that section of \cite{PW3}, general Demazure characters $d_\lambda(\pi;x)$ for $GL(n)$ can be defined with divided differences or as a sum of $x^{\Theta(T)}$ over a certain set $\mathcal{D}_\lambda(\pi)$ of semistandard tableaux.  See for example \cite{PW1}.  Given that $UG_\lambda(n) \subseteq UGC_\lambda(n) \cap UBP_\lambda(n)$, the next statement follows from Theorem 10.2(ii) of \cite{PW3} and Corollary \ref{cor777.2} here.  For this result that theorem gives $d_\lambda(\pi;x) = s_\lambda(\gamma;x)$.  Consult Section 3 of \cite{PW3} for the definitions of $\lambda$-permutations and the map $\Psi_\lambda$.

\begin{cor}\label{cor777.3}Let $\lambda$ be a partition and let $\pi$ be a $\lambda$-permutation.  If $\pi$ is $\lambda$-312-avoiding, then $\Psi_\lambda(\pi) =: \gamma$ is a gapless $\lambda$-tuple and $d_\lambda(\pi;x) = | h_{\lambda_j-j+i}(i,\gamma_j;x) |$. \end{cor}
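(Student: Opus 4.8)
The plan is to show that Corollary~\ref{cor777.3} follows by combining three facts already assembled in the excerpt. First, I would invoke Theorem 14.2(ii) of \cite{PW}, which for a $\lambda$-312-avoiding $\lambda$-permutation $\pi$ both asserts that $\gamma := \Psi_\lambda(\pi)$ is a gapless $\lambda$-tuple, i.e. $\gamma \in UG_\lambda(n)$, and identifies the Demazure character with the corresponding row bound sum: $d_\lambda(\pi;x) = s_\lambda(\gamma;x)$. This is the ``input'' step, and no real work is needed here beyond citing the predecessor paper and recalling the definitions of $\lambda$-permutation and $\Psi_\lambda$ from Section~3 of \cite{PW}.

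Second, I would observe that $\gamma \in UG_\lambda(n)$ places $\gamma$ in the hypothesis class of Corollary~\ref{cor777.2}. Indeed, the excerpt already records the chain of inclusions $UG_\lambda(n) \subseteq UI_\lambda(n) \subseteq UBP_\lambda(n)$ (from the definition of $\Xi_\lambda$, since an $R$-increasing tuple is bounded by its platform) and $UG_\lambda(n) \subseteq UGC_\lambda(n)$ (immediate from the definitions, as a gapless $\lambda$-tuple is by definition an $R$-increasing upper tuple with a flag critical list, hence a gapless core $\lambda$-tuple). Therefore $\gamma \in UGC_\lambda(n) \cap UBP_\lambda(n)$, and in particular $\gamma \in U_\lambda(n)$ so the determinant entries $h_{\lambda_j-j+i}(i,\gamma_j;x)$ are defined.

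Third, I would apply Corollary~\ref{cor777.2} directly to $\beta := \gamma$: since $\gamma \in UGC_\lambda(n) \cap UBP_\lambda(n)$, we get $s_\lambda(\gamma;x) = |\,h_{\lambda_j-j+i}(i,\gamma_j;x)\,|$. Chaining this with the identity $d_\lambda(\pi;x) = s_\lambda(\gamma;x)$ from the first step yields $d_\lambda(\pi;x) = |\,h_{\lambda_j-j+i}(i,\gamma_j;x)\,|$, which is exactly the assertion of the corollary (together with the already-noted fact that $\gamma$ is a gapless $\lambda$-tuple).

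Since every ingredient is either quoted verbatim from \cite{PW} or is one of the inclusions spelled out in Section~2 of the excerpt, there is essentially no obstacle: the ``proof'' is a two-line concatenation of Theorem 14.2(ii) of \cite{PW} with Corollary~\ref{cor777.2}, mediated by the observation $UG_\lambda(n) \subseteq UGC_\lambda(n) \cap UBP_\lambda(n)$. The only point requiring any care is making sure the reader sees why $\gamma$ satisfies \emph{both} membership conditions of Corollary~\ref{cor777.2} simultaneously; I would state the inclusion $UG_\lambda(n) \subseteq UGC_\lambda \cap UBP_\lambda(n)$ explicitly (it is already implicit in the run of set-containments near the end of Section~2) and then the rest is immediate. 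If anything, the subtlety lies upstream, in Theorem 14.2(ii) of \cite{PW} itself, but that result is being imported as a black box here, so it does not enter this proof.
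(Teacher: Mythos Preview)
Your proposal is correct and matches the paper's own argument essentially verbatim: the paper states just before Corollary~\ref{cor777.3} that, given $UG_\lambda(n) \subseteq UBP_\lambda(n)$, the result follows from Corollary~\ref{cor777.2} together with Theorem~14.2(ii) of \cite{PW}, which supplies both $\gamma \in UG_\lambda(n)$ and $d_\lambda(\pi;x) = s_\lambda(\gamma;x)$. Your only addition is making the (already implicit) inclusion $UG_\lambda(n) \subseteq UGC_\lambda(n)$ explicit, which is harmless.
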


\noindent A ``less efficient'' (in the sense of our Section \ref{effic}) version of this expression appeared in the proof of Corollary 14.6 of \cite{PS} when Postnikov and Stanley applied their skew flagged Schur function determinant identity Equation 13.1 to their $\text{ch}_{\lambda,w}$.

Section 3 of \cite{KM} proposes another approach to determinant generating function enumeration of tableaux (satisfying general row bounds) that are expressed in terms of n-paths.

\vspace{1pc}\noindent \textbf{Acknowledgments.}  We thank the referees for encouraging us to improve the exposition and the title of this paper.  The second author thanks the math departments of Connecticut College and Wesleyan University for their support; portions of this paper were developed there.

\clearpage

\end{document}